\documentclass[12pt]{amsart}

\usepackage{graphicx, amsmath, amscd, hyperref}

\setlength{\oddsidemargin}{0.25 in}
\setlength{\evensidemargin}{0.25 in}
\setlength{\textwidth}{6 in}


\newtheorem{thm}{Theorem}[section]
\newtheorem{lemma}[thm]{Lemma}

\newtheorem{cor}[thm]{Corollary}

\newtheorem*{lemma:lemma_main}{Lemma \ref{lemma_main}}
\newtheorem*{thm:thm_main}{Theorem \ref{thm_main}}
\newtheorem*{cor:cor_main}{Corollary \ref{cor_main}}
\newtheorem*{thm:thm_also_1}{Theorem \ref{thm_also_1}}
\newtheorem*{thm:thm_also_2}{Theorem \ref{thm_also_2}}
\newtheorem*{cor:cor_also}{Corollary \ref{cor_also}}

\theoremstyle{definition}

\newtheorem{rem}[thm]{Remark}
\newtheorem{question}[thm]{Question}

\newcommand{\diam}{\mathop{\rm diam}}
\newcommand{\Homeo}{\mathop{\rm Homeo}}
\newcommand{\Diff}{\mathop{\rm Diff}}
\newcommand{\id}{\mathop{\rm id}}

\newcommand{\Aut}{\mathop{\rm Aut}}

\begin{document}

\author{Michael P. Cohen}
\address{Michael P. Cohen,
Department of Mathematics,
North Dakota State University,
PO Box 6050,
Fargo, ND, 58108-6050}
\email{michael.cohen@ndsu.edu}

\title{On the large-scale geometry of diffeomorphism groups of $1$-manifolds}

\begin{abstract}  We apply the framework of Rosendal to study the large-scale geometry of the topological groups $\Diff_+^k(M^1)$, consisting of orientation-preserving $C^k$-diffeomorphisms (for $1\leq k\leq\infty$) of a compact $1$-manifold $M^1$ ($=I$ or $\mathbb{S}^1$).  We characterize the relative property (OB) in such groups: $A\subseteq\Diff_+^k(M^1)$ has property (OB) relative to $\Diff_+^k(M^1)$ if and only if $\displaystyle\sup_{f\in A}\sup_{x\in M^1}|\log f'(x)|<\infty$ and $\displaystyle\sup_{f\in A}\sup_{x\in M^1}|f^{(j)}(x)|<\infty$ for every integer $2\leq j\leq k$.  We deduce that $\Diff_+^k(M^1)$ has the local property (OB), and consequently a well-defined non-trivial quasi-isometry class, if and only if $k<\infty$.  We show that the groups $\Diff_+^1(I)$ and $\Diff_+^1(\mathbb{S}^1)$ are quasi-isometric to the infinite-dimensional Banach space $C[0,1]$.
\end{abstract}

\maketitle


\section{Introduction}

The purpose of this note is to study the large-scale geometry (or quasi-isometry type) of the topological groups $\Diff_+^k(I)$ and $\Diff_+^k(\mathbb{S}^1)$ of orientation-preserving $C^k$-diffeomorphisms of the interval $I$ and the circle $\mathbb{S}^1$, for $1\leq k\leq\infty$, and to concretely identify this geometry in case $k=1$.

Some clarification is needed since the study of quasi-isometry types of groups is most commonly confined to the case of finitely generated countable groups, while the diffeomorphism groups above are uncountable, hence certainly not finitely generated.  We work in the very general framework for coarse geometry recently advanced by Rosendal, which in many ways gives a satisfactory unification of active branches of research in large scale geometry of countable finitely generated groups (the domain of classical geometric group theory), the large scale geometry of locally compact groups (see the in-progress \cite{decornulier_delaharpe_2015a} for a thorough survey), and the coarse geometry of Banach spaces (see \cite{kalton_2008a}, \cite{ostrovskii_2013a}, for instance).  We give a brief summary of this general framework below, and refer the reader to \cite{rosendal_2015a} for a more detailed account.

\subsection{Context For Non-Locally Compact Groups}

If $G$ is a finitely generated countable group, then the quasi-isometry type of $G$ is determined by its finite symmetric generating sets $\Sigma$.  Each such $\Sigma$ defines an associated (right) Cayley metric on $G$, and each of these Cayley metrics are mutually quasi-isometric.  So any choice of Cayley metric may be said to be a representative of a canonical quasi-isometry class for $G$, and therefore to represent the large-scale geometry of $G$.  To extend this theory to groups $G$ which are perhaps not countable or finitely generated, one must seek conditions on possible generating sets $\Sigma\subseteq G$, which will guarantee that the associated Cayley metrics will turn out to be quasi-isometric.

The theory of quasi-isometry types extends readily if one assumes $G$ to be a locally compact, compactly generated metrizable topological group.  In this case, compact sets play the analogue of finite sets in countable groups, because any Cayley metric defined using a compact generating set $\Sigma$ for $G$ is quasi-isometric to any other such metric.  So the group has a canonical quasi-isometry class.  Of course, each Cayley metric is discrete and therefore ``forgets'' the topology on $G$.  However, using a classical theorem of Struble, one may in fact find a right-invariant metric $d$ on $G$ which is not only a representative of the quasi-isometry class of $G$, but in fact generates the topology on $G$.  So there exist right-invariant metrics which convey both the algebraic-topological and the geometric information of $G$.

The groups $\Diff_+^k(M^1)$ we study in this paper are not locally compact, and so other machinery is required.  The primary obstacle in extending the study of large scale geometry to the more general setting of metrizable groups, which may or may not be locally compact, lies in the absence of apparent canonical generating sets\textemdash in particular if $G$ is Polish and not locally compact, then $G$ can never be compactly generated.  Rosendal's solution to this obstacle is the invocation of the relative property (OB), introduced in \cite{rosendal_2009a} and \cite{rosendal_2015a}.  If $G$ is any separable metrizable group, then a subset $\Sigma\subseteq G$ is said to have the \textit{property (OB) relative to $G$} if $\Sigma$ has finite diameter with respect to every right-invariant topologically compatible metric on $G$.  The property (OB) in a separable metrizable group is a suitable analogue for being finite in a countable group or compact in a locally compact group\textemdash indeed, a set in a countable group has property (OB) if and only if it is finite, and a set in a locally compact group has property (OB) if and only if its closure is compact.  $G$ is called \textit{locally (OB)} if it has a neighborhood of identity with the property (OB) relative to $G$.

Rosendal has shown that if $G$ is locally (OB), and is generated as a group by an open subset $\Sigma$ which has the property (OB) relative to $G$, then $G$ admits a topologically compatible right-invariant metric $d$, which assigns finite diameter precisely to those subsets which are relatively (OB) in $G$, and which is quasi-isometric to the right-invariant Cayley metric $d_\Sigma$.  Consequently, all Cayley metrics $d_\Sigma$, where $\Sigma$ is an open set with the property (OB) in $G$, are mutually quasi-isometric, and we are thus able to speak of a canonical quasi-isometry class for $G$.

Some groups, such as the group $\Homeo_+(I)$ of increasing homeomorphisms of the interval, and the infinite permutation group $S_\infty$, are locally (OB) and (OB)-generated but have a trivial quasi-isometry type, i.e. the quasi-isometry type of a singleton, despite being quite large, non-locally compact groups.  Rosendal has identified other non-locally compact groups which have a non-trivial quasi-isometry type, and in many cases he has actually computed this type by displaying an ``understandable'' space as a type representative.  For instance the group $\Aut(\mathcal{T})$ of automorphisms of the $\aleph_0$-regular tree is quasi-isometric to the tree $\mathcal{T}$ itself (see \cite{rosendal_2015a}, \cite{rosendal_2015b} for this and other examples).

\subsection{Main Results}

Our first task in the present work is to address the question: exactly which subsets of $\Diff_+^k(I)$ and $\Diff_+^k(\mathbb{S}^1)$ have the relative property (OB), and thus play the analogue of compact sets in these groups?  We classify the property (OB) in these groups in terms of an easily recognizable boundedness property.

\begin{thm}[see Theorem \ref{ob_character}]  Let $1\leq k\leq\infty$ and let $M^1=I$ or $M^1=\mathbb{S}^1$.  A subset $A\subseteq\Diff_+^k(M^1)$ has the relative property (OB) if and only if $\displaystyle\sup_{f\in A}\sup_{x\in M^1}|\log f'(x)|<\infty$ and $\displaystyle\sup_{f\in A}\sup_{x\in M^1}|f^{(j)}(x)|<\infty$ for every integer $2\leq j\leq k$.
\end{thm}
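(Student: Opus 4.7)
The plan is to prove the two implications separately, both resting on Rosendal's characterization of property (OB): $A$ has relative property (OB) in $G := \Diff_+^k(M^1)$ if and only if, for every neighborhood $V$ of $\id$, there exist a finite set $F \subseteq G$ and $n \in \mathbb{N}$ such that $A \subseteq (FV)^n$.

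For the forward implication I would apply this characterization to the specific $C^k$-neighborhood $V := \{u \in G : \sup_x|\log u'(x)| < 1 \text{ and } \sup_x|u^{(j)}(x)| < 1 \text{ for } 2 \leq j \leq k\}$. Property (OB) yields a finite $F \subseteq G$ and $n \in \mathbb{N}$ with $A \subseteq (FV)^n$. Finiteness of $F$ provides uniform bounds $M$ on $\sup_x|\log h'(x)|$ and $\sup_x|h^{(j)}(x)|$ for $h \in F$, while the definition of $V$ gives bound $1$ on the same quantities for $u \in V$. An iterated application of the chain rule (for $\log f'$) and Fa\`a di Bruno's formula (for $f^{(j)}$, $j \geq 2$) to any length-$2n$ product $h_1 u_1 h_2 u_2 \cdots h_n u_n \in (FV)^n$ bounds $\sup_x|\log f'(x)|$ and $\sup_x|f^{(j)}(x)|$ in terms of $M$, $1$, $j$, and $n$ alone, yielding uniform sup bounds across $A$.

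For the backward implication, assume the sup bounds $C_1, \ldots, C_k$ and let $V$ be any neighborhood of $\id$ in the $C^k$ topology. It suffices, taking $F = \{\id\}$ in the characterization, to find $n \in \mathbb{N}$, depending only on $V$ and the $C_j$, with $A \subseteq V^n$. I would decompose each $f \in A$ via the straight-line homotopy $f_t := (1-t)\id + tf$ in $G$: since $f_t'(x) = (1-t) + tf'(x)$ is a convex combination of $1$ and $f'(x) \in [e^{-C_1}, e^{C_1}]$ it stays in this range, while $f_t^{(j)} = tf^{(j)}$ for $j \geq 2$ is bounded by $C_j$. Writing $f = g_n \circ \cdots \circ g_1$ with $g_i := f_{i/n} \circ f_{(i-1)/n}^{-1}$, a direct chain-rule computation gives $g_i(y) - y = (1/n)(f(u) - u)$ for $u := f_{(i-1)/n}^{-1}(y)$, and more generally $\sup_x|(g_i - \id)^{(j)}(x)| \leq K/n$ for each $0 \leq j \leq k$, with $K$ depending only on $C_1, \ldots, C_k$. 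Taking $n$ large enough forces each $g_i$ into $V$, so $f \in V^n$ uniformly in $f \in A$.

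The conceptual skeleton is clean in both directions --- iterated composition estimates forward and straight-line fragmentation backward --- and the main technical obstacle is the patient Fa\`a di Bruno bookkeeping needed to bound derivatives of long products of diffeomorphisms in both halves of the proof. A small additional subtlety arises in the circle case $M^1 = \mathbb{S}^1$: the straight-line homotopy must be interpreted on the universal cover $\mathbb{R}$, using that lifts of orientation-preserving diffeomorphisms of $\mathbb{S}^1$ differ from the identity by a $\mathbb{Z}$-periodic function, but this descent is routine.
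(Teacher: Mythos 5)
Your argument is correct in both directions, but it takes a genuinely different route from the paper's. For the forward implication the paper never estimates derivatives of long compositions directly: it introduces the right-invariant pseudometrics $d_j(f,g)=\|\Phi_j(f)-\Phi_j(g)\|$ built from the logarithmic derivatives $\phi_j(f)=(\log f')^{(j-1)}$, proves that right translation by $h^{-1}$ is $d_j$-Lipschitz with constant controlled by $\|\Phi_j(h)\|$ (Lemma \ref{lemma_main}), so that $a\in U^r$ gives $d_j(a,e)\leq(1+C+\dots+C^{r-1})N$ by the triangle inequality, and only afterwards converts $d_j$-boundedness into sup bounds on $f^{(j)}$ via a comparison lemma. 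Your direct chain-rule/Fa\`a di Bruno estimate on products $h_1u_1\cdots h_nu_n$ is more elementary and bypasses that machinery (which the paper reuses later to compute the quasi-isometry type), at the cost of heavier bookkeeping; it is sound because every factor has all derivatives up to the relevant order uniformly bounded and first derivative uniformly bounded below. For the backward implication both proofs fragment $f$ along a path from $\id$ to $f$, but in different coordinates: the paper interpolates linearly in the Banach-space coordinate, setting $F_i=\tfrac{i}{r}\phi_j(f)$ and pulling back through $\Phi_j^{-1}$ so as to reuse the translation Lipschitz estimate verbatim, whereas you interpolate the diffeomorphism itself via $f_t=(1-t)\id+tf$; your identity $g_i-\id=\tfrac1n(f-\id)\circ f_{(i-1)/n}^{-1}$, combined with the convexity observation $f_t'\geq e^{-C_1}$ and $f_t^{(j)}=tf^{(j)}$, does yield $\rho_k(g_i,\id)=O(1/n)$ uniformly over $A$, so the fragmentation closes. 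Your handling of the circle is in fact tidier than the paper's, which must pass to the stabilizer of $0$ (where the $\Phi_j$ become injective) and absorb the rotation subgroup by its compactness, while the straight-line homotopy on the universal cover descends directly. Two small repairs: for $k=\infty$ your single neighborhood $V$ constraining \emph{all} derivatives is not open in $\Diff_+^\infty(M^1)$, so the forward direction should be run one $j$ at a time with the neighborhood constraining only derivatives of order at most $j$ (this costs nothing, as the conclusion is a separate bound for each $j$); and since the group is connected Polish, the paper's form of the (OB) criterion drops the finite set $F$ altogether ($A\subseteq U^r$), which would spare you the need to bound the elements of $F$.
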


It seems reasonable to conjecture that an appropriate analogue of the characterization above will hold for relatively (OB) sets in the diffeomorphism group of a general compact manifold of arbitrary dimension.

Our characterization yields the following corollary.

\begin{thm}  $\Diff_+^k(I)$ and $\Diff_+^k(\mathbb{S}^1)$ are locally (OB) and have a well-defined, non-trivial quasi-isometry class, if and only if $k<\infty$.
\end{thm}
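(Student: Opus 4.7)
The plan is to derive both directions directly from the characterization of relative (OB) subsets in Theorem \ref{ob_character}, combined with the connectedness of $\Diff_+^k(M^1)$ and Rosendal's general framework.

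Suppose first that $k<\infty$. I would exhibit as an explicit open (OB) neighborhood of the identity the basic $C^k$-neighborhood
\[ U=\left\{f\in\Diff_+^k(M^1):\ |f'(x)-1|<\tfrac{1}{2} \text{ and } |f^{(j)}(x)|<1 \text{ for all }x\in M^1 \text{ and all }2\leq j\leq k\right\}, \]
on which $|\log f'(x)|<\log 2$. Theorem \ref{ob_character} then shows $U$ is relatively (OB), so $\Diff_+^k(M^1)$ is locally (OB). Since $\Diff_+^k(M^1)$ is path-connected (any $f$ is joined to $\id$ via $f_t=(1-t)\id+tf$, which remains a $C^k$-diffeomorphism because $f_t'>0$), $U$ generates the whole group, and Rosendal's theorem produces a canonical quasi-isometry class. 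To see this class is non-trivial, I would exhibit a sequence $(f_n)\subseteq\Diff_+^k(M^1)$ with $\sup_x|\log f_n'(x)|\to\infty$, constructed for instance by smoothly interpolating maps that compress one subinterval by a factor of $n$ and stretch the complementary region accordingly; by Theorem \ref{ob_character} such $\{f_n\}$ fails to be relatively (OB), so the whole group has infinite diameter in any compatible metric in its QI class.

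Now assume $k=\infty$, and suppose toward a contradiction that $U\ni\id$ is an open relatively (OB) neighborhood. By definition of the $C^\infty$-topology, $U$ contains a basic set $V_{N,\epsilon}=\{f:\|f-\id\|_{C^N}<\epsilon\}$ for some $N\geq 1$ and $\epsilon>0$. I plan to produce a family $(f_\lambda)\subseteq V_{N,\epsilon}$ with $\sup_\lambda\|f_\lambda^{(N+1)}\|_\infty=\infty$, which by Theorem \ref{ob_character} contradicts the relative (OB)-ness of $V_{N,\epsilon}$. The intended construction is the oscillating perturbation
\[ f_\lambda(x) = x + \frac{\epsilon}{2\lambda^N}\sin(\lambda x), \]
taken over positive integers $\lambda$ when $M^1=\mathbb{S}^1=\mathbb{R}/2\pi\mathbb{Z}$, and over $\lambda\in\pi\mathbb{Z}_{>0}$ when $M^1=I=[0,1]$ (ensuring $f_\lambda(0)=0$ and $f_\lambda(1)=1$). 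A direct computation gives $|f_\lambda^{(j)}(x)|\leq(\epsilon/2)\lambda^{j-N}$ for every $j\geq 1$, so $f_\lambda\in V_{N,\epsilon}$ and $f_\lambda'>0$ for $\lambda$ large, while $\|f_\lambda^{(N+1)}\|_\infty=(\epsilon/2)\lambda\to\infty$. Hence $\Diff_+^\infty(M^1)$ is not locally (OB), and in particular fails to have a well-defined QI type in Rosendal's sense.

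The only substantive step is the bookkeeping in the $k=\infty$ case: one must choose a perturbation whose amplitude decays fast enough (order $\lambda^{-N}$) to remain small in every seminorm $C^j$ with $j\leq N$ and keep $f_\lambda$ a diffeomorphism, yet slowly enough that the $(N+1)$st derivative blows up. The scaling $\sim\lambda^{-N}$ above is forced precisely by this asymmetry, and once it is in place the rest of the theorem is merely an unwinding of Theorem \ref{ob_character} together with the connectedness of $\Diff_+^k(M^1)$.
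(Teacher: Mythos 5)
Your proposal is correct and follows essentially the same route as the paper: both directions are read off from Theorem \ref{ob_character}, using connectedness to see that an open relatively (OB) neighborhood generates (hence Rosendal's framework applies for $k<\infty$), and using the fact that every basic $C^N$-neighborhood contains maps with unbounded $(N+1)$st derivatives to rule out $k=\infty$. Your explicit oscillating family $f_\lambda$ simply instantiates the existence claim the paper leaves implicit (modulo trivial adjustments, e.g.\ taking $\epsilon$ small so that $f_\lambda'>0$ when $N=1$).
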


So $\Diff_+^\infty(I)$ and $\Diff_+^\infty(\mathbb{S}^1)$ fail to be generated by open relatively (OB) subsets, and therefore do not have a canonical quasi-isometry type in the sense of \cite{rosendal_2015a}.  However, for $1\leq k<\infty$, studying the geometry of these groups is a well-formulated objective.

At the moment we do not know exactly the quasi-isometry type of these groups for $k\geq 2$.  However, for $k=1$ we completely classify the geometry, in the main theorem below.

\begin{thm}[see Corollary \ref{cor_main}]  The following are mutually quasi-isometric:
\begin{enumerate}
		\item $\Diff_+^1(I)$ as a topological group;
		\item $\Diff_+^1(\mathbb{S}^1)$ as a topological group;
		\item $C[0,1]$ as a Banach space;
		\item $C[0,1]$ as an additive topological group.
\end{enumerate}
\end{thm}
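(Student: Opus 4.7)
My plan is to realize $\Diff_+^1(I)$ isometrically as a non-linear hypersurface in $C[0,1]$ via the log-derivative, then bi-Lipschitzly straighten that hypersurface to a closed linear hyperplane, and finally invoke classical Banach space theory to identify the hyperplane with $C[0,1]$ itself. The $\mathbb{S}^1$-case will reduce to the $I$-case through a compact extension, and the Banach/additive-group identification of $C[0,1]$ will fall out of Rosendal's framework directly.

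The starting point is the metric $d(f,g) := \|\log f' - \log g'\|_\infty$ on $\Diff_+^1(I)$. A short change-of-variable computation shows $d(f,g) = \|\log(fg^{-1})'\|_\infty$, so $d$ is right-invariant and generates the $C^1$-topology; and by Theorem~\ref{ob_character} its bounded sets are precisely the relatively (OB) sets. To place $d$ in the canonical Rosendal quasi-isometry class, I would compare it to the Cayley metric of the open (OB) set $\Sigma := \{f : \|\log f'\|_\infty < 1\}$ by factoring any $f$ along the ``exponential path'' $f_t$ defined by $\log f_t' = t\log f' - \log \int_0^1 e^{t \log f'}$; this path produces, for any $f$ with $\|\log f'\|_\infty = M$, a word in $\Sigma$ of length $O(M)$, matching the trivial lower bound $\|\log f'\|_\infty \leq d_\Sigma(f, \id)$ up to a multiplicative constant.

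With $d$ so normalized, $L(f) := \log f'$ is tautologically an isometric bijection from $(\Diff_+^1(I), d)$ onto the hypersurface $H := \{g \in C[0,1] : \int_0^1 e^g = 1\}$. I would next introduce $\Psi(g) := g - g(0)$, which is a bi-Lipschitz bijection from $H$ onto the closed hyperplane $A := \{g \in C[0,1] : g(0) = 0\}$, with inverse $h \mapsto h - \log \int_0^1 e^h$. The upper bi-Lipschitz estimate is immediate; the lower is a direct consequence of the identity $g(0) = -\log \int_0^1 e^{\Psi(g)}$ (valid because $\int e^g = 1$) together with the $1$-Lipschitz property of $h \mapsto \log \int_0^1 e^h$ in sup norm. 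Finally, a classical Banach space result (via Pe\l{}czy\'nski's decomposition method) gives that $C[0,1]$ is isomorphic to each of its closed hyperplanes, so $A$ is linearly bi-Lipschitz to $C[0,1]$, yielding the desired quasi-isometry $\Diff_+^1(I) \to C[0,1]$.

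For $\mathbb{S}^1$, the map $f \mapsto (R_{-f(0)} \circ f,\, f(0))$ is a homeomorphism $\Diff_+^1(\mathbb{S}^1) \to \Diff_+^1(I) \times \mathbb{S}^1$ (identifying the stabilizer of $0 \in \mathbb{S}^1$ with $\Diff_+^1(I)$) that preserves the log-derivative on the first factor, and compactness of $\mathbb{S}^1$ reduces the circle case to the interval case. The Banach/additive-group equivalence for $C[0,1]$ follows at once since the norm is a compatible translation-invariant metric whose bounded sets are the (OB) sets in either structure (any compatible translation-invariant metric on a Banach space is dominated by a linear function of the norm, via a standard subadditivity argument on $\phi(x) := d(0,x)$). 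The main technical obstacle is the exponential-path factorization that certifies $d$ as lying in the canonical quasi-isometry class, rather than merely having the correct bounded sets; the Banach-theoretic input on hyperplanes is the other non-trivial ingredient, though it is classical.
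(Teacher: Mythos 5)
Your overall route is the same as the paper's: pull the sup-norm of $C[0,1]$ back along the logarithmic derivative, verify that the resulting right-invariant compatible metric is coarsely proper (via Theorem~\ref{ob_character}) and lies in the canonical quasi-isometry class via Lemma~\ref{prop_47}, identify the image with a closed finite-codimension subspace of $C[0,1]$, and finish with the classical fact that $C[0,1]$ is isomorphic to each of its closed hyperplanes. Your interval argument is sound; the only real difference is that you work with $\|\log f'-\log g'\|$ and then straighten the hypersurface $\{g:\int_0^1 e^{g}=1\}$ onto $\{g:g(0)=0\}$, whereas the paper's $\Phi_1(f)=\log f'-\log f'(0)$ is exactly your composite $\Psi\circ L$ and is already an isometry onto the hyperplane; this lets the paper skip both the bi-Lipschitz straightening and the exponential-path factorization, since geodesicity of the target Banach space does all the work in Lemma~\ref{prop_47}.

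The circle case is where your sketch has a genuine inaccuracy. The stabilizer $H$ of $0$ in $\Diff_+^1(\mathbb{S}^1)$ is \emph{not} identifiable with $\Diff_+^1(I)$: an element of $H$, read on $[0,1]$ via its lift, must satisfy the periodicity constraint $f'(0)=f'(1)$, so under the log-derivative $H$ lands on the codimension-two subspace $Z=\{g\in C[0,1]:g(0)=g(1)=0\}$ rather than on the hyperplane. This does not hurt the final statement, since $Z$ is still a closed subspace of finite codimension and hence isomorphic to $C[0,1]$, but your product decomposition $\Diff_+^1(\mathbb{S}^1)\cong\Diff_+^1(I)\times\mathbb{S}^1$ is false as written. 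Separately, the claim that ``compactness of $\mathbb{S}^1$ reduces the circle case to the interval case'' conceals the real work: the log-derivative gives only a pseudometric on $\Diff_+^1(\mathbb{S}^1)$ (it does not see $f(0)$), so one must add a $C^0$ term to obtain a compatible right-invariant coarsely proper metric $\sigma_1$, and then prove that $(\Diff_+^1(\mathbb{S}^1),\sigma_1)$ is large-scale geodesic; the paper does this by interpolating a large-scale geodesic in $H$ with $n$-th roots of the rotation carrying $0$ to $f(0)$. Your remaining ingredients (the hyperplane isomorphism, and the equivalence of (3) and (4) via subadditivity and local boundedness of $x\mapsto d(0,x)$) are correct.
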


So these groups have a very large and complicated large scale geometry.  In particular every separable Banach space embeds quasi-isometrically into both $\Diff_+^1(I)$ and $\Diff_+^1(\mathbb{S}^1)$, since $C[0,1]$ is a universal separable metric space by a classical theorem of Banach and Mazur \cite{banach_mazur_1932a}.  The quasi-isometric equivalence of (3) and (4) above was proved by Rosendal.

Actually we show a little bit more: we display concrete, easily understandable right-invariant topologically compatible metrics on $\Diff_+^1(I)$ and $\Diff_+^1(\mathbb{S}^1)$ which are representatives of each respective group's quasi-isometry class.  Fixing these metrics, we display natural mappings which make $\Diff_+^1(I)$ isometric to a closed subspace of codimension $1$ in $C[0,1]$, and $\Diff_+^1(\mathbb{S}^1)$ quasi-isometric to a closed subspace of codimension $2$ in $C[0,1]$.  Using the well-known fact that $C[0,1]$ is isomorphic to each of its closed hyperplanes, we obtain the result above.\\

\noindent \textit{Acknowledgments.}  I would like to thank C. Rosendal, B. Sari, and A. Akhmedov for helpful discussions and remarks.

\section{Notation and Preliminaries}

\subsection{Coarse Metric Geometry}

Although the theory of coarse geometry of topological groups may be developed in some tremendous generality, we are really only concerned with the groups $\Diff_+^k(M^1)$, which are connected Polish groups.  We present the basic facts we need and refer the reader to \cite{rosendal_2015a} for a detailed study.

If $A$ is a subset of a connected Polish group $G$, then $A$ has the property (OB) relative to $G$ if and only if the following property holds: for every open neighborhood of identity $U\subseteq G$, there exists a positive integer $r$ so that $A\subseteq U^r$.  It follows immediately from this characterization, for instance, that all compact subsets of connected Polish groups have the relative property (OB).  If $A$ and $B$ are relatively $(OB)$ subsets of $G$ then so is $AB$.

Let $G$ be a Polish group and $d$ a metric on $G$.  Then $d$ is called \textit{coarsely proper} if for every $A\subseteq G$, $A$ has the property (OB) relative to $G$ if and only if $\diam_d(A)<\infty$.

The metric space $(G,d)$ is called \textit{large-scale geodesic} if there exists a constant $K\geq 1$ so that for all $f,g\in G$, there are $f=\ell_0,\ell_1,\ell_2,...,\ell_n=g$ in $G$ so that $d(\ell_{i-1},\ell_i)\leq K$ for $1\leq i\leq n$, and $\displaystyle\sum_{i=1}^nd(\ell_{i-1},\ell_i)\leq K\cdot d(f,g)$.  For instance, if $(G,d)$ is a geodesic space in the usual sence, then $(G,d)$ is large-scale geodesic with constant $K=1$.

We need the following central characterization for representatives of the quasi-isometry class of $G$:

\begin{lemma}[\cite{rosendal_2015a} Proposition 47] \label{prop_47}  Let $d$ be a topologically compatible right-invariant metric on a topological group $G$.  The following are equivalent:
\begin{enumerate}
		\item $d$ is coarsely proper and $(G,d)$ is large-scale geodesic;
		\item $d$ is quasi-isometric to each right Cayley metric given by a relatively (OB) symmetric generating set in $G$.
\end{enumerate}
\end{lemma}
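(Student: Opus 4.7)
The plan is to prove (1) $\Leftrightarrow$ (2) by directly comparing a fixed right-invariant topologically compatible metric $d$ with the right Cayley metric $d_\Sigma$ of a symmetric relatively (OB) generating set $\Sigma$. A preliminary observation needed on both sides: since $G = \bigcup_n \Sigma^n$ and $G$ is Polish, Baire category implies some $\Sigma^n$ has nonempty interior, so $\Sigma^{2n}$ contains an open identity neighborhood $V$. Hence any relatively (OB) set $A \subseteq G$ is contained in $V^r \subseteq \Sigma^{2nr}$ for some $r$, so it has finite $d_\Sigma$-diameter. In particular $D := \diam_d(\Sigma \cup \{e\}) < \infty$ whenever $d$ is coarsely proper.

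For (1) $\Rightarrow$ (2), the bound $d \leq D \cdot d_\Sigma$ comes from a right-invariant telescoping: given $fg^{-1} = \sigma_1 \cdots \sigma_N$ with $N = d_\Sigma(f,g)$, set $h_k = \sigma_k \cdots \sigma_N$ and $h_{N+1} = e$; since $h_k h_{k+1}^{-1} = \sigma_k$, right-invariance gives $d(h_k, h_{k+1}) = d(\sigma_k, e) \leq D$, and the triangle inequality yields $d(f,g) = d(e, fg^{-1}) \leq N D = D \cdot d_\Sigma(f,g)$. For the reverse bound, note that $B := \{ h : d(e,h) \leq K \}$ is relatively (OB) by coarse properness, so $B \subseteq \Sigma^m$ for some $m$ by the preliminary observation. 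Given a large-scale geodesic $d$-chain $f = \ell_0, \ldots, \ell_n = g$, first thin it (merge consecutive short steps so every remaining step has $d$-length in $[K/2, K]$) to force $n \leq 2 d(f,g) + 1$ with each step in $B$; then $d_\Sigma(\ell_{i-1}, \ell_i) \leq m$ for every $i$, so $d_\Sigma(f,g) \leq mn \leq 2m \cdot d(f,g) + m$.

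For (2) $\Rightarrow$ (1), coarse properness of $d$ follows from a two-way containment: relatively (OB) sets have finite $d_\Sigma$-diameter (by the preliminary observation) hence finite $d$-diameter, while a set of finite $d$-diameter has finite $d_\Sigma$-diameter, so lies in some $\Sigma^r$ and is therefore relatively (OB). Large-scale geodesicity of $(G, d_\Sigma)$ is immediate with constant $1$, since a minimal-length word expresses $fg^{-1}$ as a product of $d_\Sigma(f,g)$ generators, giving a chain of unit $d_\Sigma$-steps. Transferring this chain along the quasi-isometry $\id : (G, d_\Sigma) \to (G, d)$ yields large-scale geodesicity of $(G, d)$ with a larger constant, using the quasi-isometry inequalities to control both the maximum $d$-step size and the total $d$-length of the transferred chain.

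The main obstacle is the reverse bound $d_\Sigma \leq C \cdot d + C$ in step (1) $\Rightarrow$ (2): the forward bound is immediate from right-invariance, but converting a $d$-chain into a Cayley word requires a uniform bound on $d_\Sigma$-distance on the $d$-ball $B$ of radius $K$, which in turn rests on two essentially non-metric facts, namely coarse properness (to place $B$ among the relatively (OB) sets) and the Baire-category containment $B \subseteq \Sigma^m$. Once these are in hand, the quasi-isometry is a matter of careful bookkeeping; the remaining analytic content concerns regularity of the geodesic chain, which I have finessed with the $[K/2, K]$-thinning.
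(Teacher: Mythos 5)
This lemma is quoted from Rosendal's \emph{Coarse geometry of topological groups} (Proposition 47) and the paper supplies no proof of its own, so there is nothing internal to compare against; your argument is essentially the standard one from that source, and its overall architecture is sound: the telescoping estimate $d(f,g)=d(fg^{-1},e)\leq N\cdot D$ via $h_k=\sigma_k\cdots\sigma_N$ is correct, the reduction of the reverse inequality to the containment $B_d(e,K)\subseteq\Sigma^m$ is the right move, and the transfer of large-scale geodesicity along the identity quasi-isometry in (2)$\Rightarrow$(1) is routine bookkeeping as you say.

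The one genuine gap is your ``preliminary observation.'' The Baire category theorem applied to $G=\bigcup_n\Sigma^n$ yields only that some $\Sigma^n$ is \emph{non-meager}, not that it has nonempty interior, and passing from non-meager to ``$\Sigma^{2n}$ contains an identity neighborhood'' is Pettis's lemma, which requires $\Sigma^n$ to have the Baire property. For a completely arbitrary relatively (OB) symmetric generating set this step fails, and in fact the lemma as literally stated fails with it: in $G=\mathbb{R}$, take $\Sigma=\{qb: b\in B,\ q\in\mathbb{Q},\ |qb|\leq 1\}$ for a Hamel basis $B\subseteq[1,2]$; this is a bounded (hence relatively (OB)) symmetric generating set, yet every $\Sigma^n$ consists of reals with Hamel support of size at most $n$ and so contains no interval, whence $[0,1]$ has infinite $d_\Sigma$-diameter while the Euclidean metric is coarsely proper and geodesic. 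The statement implicitly needs $\Sigma$ to be open (as in the paper's introduction, where all (OB) generating sets considered are open) or at least analytic, so that Pettis applies; you should either add that hypothesis or cite the characterization of relatively (OB) sets in connected Polish groups ($A\subseteq U^r$ for every open identity neighborhood $U$) together with $V\subseteq\Sigma^{2n}$ for an open $V\ni e$, which is where the regularity of $\Sigma$ is genuinely used. A second, minor point: the $[K/2,K]$-thinning cannot be achieved as stated, since merging consecutive steps of a backtracking chain can produce a step of arbitrarily small $d$-length, and grouping until cumulative length reaches $K/2$ can produce a merged step of $d$-length up to $3K/2$. The correct bookkeeping is to group greedily by \emph{cumulative constituent length}, bound the number of groups by $\sum_i d(\ell_{i-1},\ell_i)/(K/2)+1\leq 2d(f,g)+1$, and take $B$ to be the $d$-ball of radius $2K$; this changes nothing downstream.
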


\subsection{Banach Spaces}

Beginning now and throughout the remainder of Section 2, we consider integers $0\leq k<\infty$.  For such $k$, we let $C^k[0,1]$ denote the Banach space of real-valued functions of class $C^k$ with domain $[0,1]$.  We also denote $C[0,1]=C^0[0,1]$.  For any such space $X=C^k[0,1]$, we set\\

\begin{center} $X^\#=\{f\in X:f(0)=0\}$.
\end{center}
\vspace{.3cm}

For any space $X$ of the form $X=C^k[0,1]$ or $X=C^k[0,1]^\#$ or $X=C^k[0,1]\oplus\mathbb{R}^n$ ($n\geq 1$), we let $\|\cdot\|$ denote the supremum norm on $X$.  So $\|\cdot\|$ induces the usual Banach space structure on $X=C[0,1]$ or $X=C[0,1]^\#$ or $X=C[0,1]\oplus\mathbb{R}^n$, but not on $X=C^k[0,1]$.  The usual norm on $C^k[0,1]$ will be denoted by $\|\cdot\|_k$.  (So $\|f\|_k=\displaystyle\sum_{i=0}^k\|f^{(i)}\|$ for each $f\in C^k[0,1]$.)

We permanently fix the antiderivative mapping $\mathcal{I}:C[0,1]\oplus\mathbb{R}\rightarrow C^1[0,1]$ defined by\\

\begin{center} $\mathcal{I}(f,b)(x)=\int_0^x f(t)dt+b$ for all $x\in[0,1]$.
\end{center}
\vspace{.3cm}

Note that for each $k\geq 0$, $\|\mathcal{I}(f,b)\|_{k+1}\leq\|f\|_k+\|b\|\leq 2\max(\|f\|_k,\|b\|)$.  So the restriction of $\mathcal{I}$ to $C^k[0,1]\oplus\mathbb{R}$ is a bounded linear operator from $C^k[0,1]\times\mathbb{R}$ into $C^{k+1}[0,1]$ with $\|\mathcal{I}\|=2$.

We also define $\iota:C[0,1]\rightarrow C^1[0,1]$ by $\iota(f)=\mathcal{I}(f,0)$.  The following lemma is straightforward to check.

\begin{lemma} \label{lemma_iota}  For each $k\geq0$, the restriction of the mapping $\mathcal{I}$ to $C^k[0,1]\oplus\mathbb{R}$ is a Banach space isomorphism (and hence a homeomorphism) of $C^k[0,1]\oplus\mathbb{R}$ onto $C^{k+1}[0,1]$.  
The restriction of $\iota$ to $C^k[0,1]$ is a Banach space isomorphism (and hence a homeomorphism) of $C^k[0,1]$ onto $C^{k+1}[0,1]^\#$.
\end{lemma}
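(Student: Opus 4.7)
The plan is to verify the four required properties (linearity, boundedness, bijectivity, bounded inverse) for $\mathcal{I}|_{C^k[0,1]\oplus\mathbb{R}}$, then derive the statement for $\iota$ as a direct corollary by restricting to the hyperplane where $b=0$.

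First I would observe that linearity of $\mathcal{I}$ is immediate from the linearity of integration, and that the text has already noted the bound $\|\mathcal{I}(f,b)\|_{k+1}\leq \|f\|_k + |b|$, so it remains to show that $\mathcal{I}$ does indeed land in $C^{k+1}[0,1]$ (rather than just $C^1[0,1]$), that it is a bijection onto $C^{k+1}[0,1]$, and that the inverse is bounded. The range claim follows from the fundamental theorem of calculus: if $f\in C^k[0,1]$, then $\mathcal{I}(f,b)$ is differentiable with $\mathcal{I}(f,b)'=f \in C^k[0,1]$, so $\mathcal{I}(f,b)\in C^{k+1}[0,1]$.

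Next I would write down the candidate inverse explicitly, namely $\Phi:C^{k+1}[0,1]\to C^k[0,1]\oplus\mathbb{R}$ defined by $\Phi(g)=(g',g(0))$. The verification that $\Phi\circ\mathcal{I}=\id$ is immediate from the fundamental theorem of calculus (evaluating at $0$ recovers $b$, differentiating recovers $f$), and $\mathcal{I}\circ\Phi=\id$ follows from the identity $g(x)=\int_0^x g'(t)\,dt+g(0)$. To bound $\Phi$, note
\[
\|\Phi(g)\|_{C^k\oplus\mathbb{R}}=\|g'\|_k+|g(0)|=\sum_{i=0}^k\|g^{(i+1)}\|+|g(0)|\leq 2\|g\|_{k+1},
\]
so $\Phi$ is a bounded linear map. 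Together with the boundedness of $\mathcal{I}$ this shows that $\mathcal{I}$ is a Banach space isomorphism onto $C^{k+1}[0,1]$, and of course any Banach space isomorphism is a homeomorphism for the norm topologies.

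For the second assertion, I would simply restrict: the subspace $C^k[0,1]\oplus\{0\}\subseteq C^k[0,1]\oplus\mathbb{R}$ is naturally identified with $C^k[0,1]$, and $\mathcal{I}$ carries this subspace onto $\{g\in C^{k+1}[0,1]:g(0)=0\}=C^{k+1}[0,1]^\#$ by the FTC computation above. Since $\iota$ is the restriction of $\mathcal{I}$ to this subspace, $\iota:C^k[0,1]\to C^{k+1}[0,1]^\#$ is a linear homeomorphism, with inverse $g\mapsto g'$. There is no real obstacle here; the only thing to be careful about is ensuring the norm estimates match up so that one genuinely obtains a Banach space isomorphism (bounded in both directions) rather than merely a linear bijection, but this is a one-line calculation in either direction.
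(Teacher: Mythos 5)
Your proof is correct; the paper omits the argument entirely (the lemma is labeled ``straightforward to check'' and given only a $\qed$), and your verification via the explicit inverse $g\mapsto(g',g(0))$, the fundamental theorem of calculus, and the two-sided norm estimates is exactly the intended routine argument.
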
 \hfill $\qed$

\subsection{Interval and Circle Diffeomorphisms}

As we study the groups $\Diff_+^k(\mathbb{S}^1)$, we wish to avoid discussion of charts and universal coverings, so we choose the simplest available conventions.  We imagine $\mathbb{S}^1$ as the topological quotient space $I/E$, where $E$ is the equivalence relation on $I$ identifying $0$ and $1$.  We define the distance $d_{\mathbb{S}^1}$ in the circle via a natural parametrization, say, $d_{\mathbb{S}^1}([x],[y])=\|e^{2\pi ix}-e^{2\pi iy}\|$ for all (equivalence classes) $[x],[y]\in\mathbb{S}^1=I/E$, where the norm is taken in $\mathbb{C}$.  Whenever $x\in\mathbb{R}$ and $0\leq x\leq 1$, we adopt the habit of identifying $x$ with its equivalence class $[x]$, and understanding $x\in\mathbb{S}^1$.

To each homeomorphism $f:\mathbb{S}^1\rightarrow\mathbb{S}^1$ we associate its unique lift map $\tilde{f}:\mathbb{R}\rightarrow\mathbb{R}$, which satisfies: (1) $\tilde{f}(0)\in[0,1)$; (2) $\tilde{f}(x)+1=\tilde{f}(x+1)$ for all $x\in\mathbb{R}$; and (3) $f(x \mod 1)=\tilde{f}(x) \mod 1$ for all $x\in\mathbb{R}$.  We say such a homeomorphism $f$ is of class $C^k$ if and only if $\tilde{f}$ is of class $C^k$ (which necessarily implies $\tilde{f}^{(j)}(0)=\tilde{f}^{(j)}(1)$ for all $0\leq j\leq k$).  In general we define the derivatives $f^{(j)}(x)=\tilde{f}^{(j)}(x)$ for each $x\in\mathbb{S}^1$, each $f\in\Diff_+^k(\mathbb{S}^1)$, and each $1\leq j\leq k$.  It is straightforward to verify that these derivatives satisfy the usual chain rule, product rule, quotient rule, and inversion rule.

By a small abuse of notation, we regard the group of rotations $\mathbb{S}^1$ as a closed subgroup of $\Diff_+^k(\mathbb{S}^1)$, by identifying each $[t]\in\mathbb{S}^1$ with the mapping $[x]\mapsto[(x+t)\mod 1]$.

For the remainder of this section let either $M^1=I$ or $M^1=\mathbb{S}^1$.  A standard compatible metric $\rho_k$ on $\Diff_+^k(M^1)$ is given by the following:

\begin{center} $\rho_k(f,g)=\displaystyle\sup_{x\in M^1}d_{M^1}(f(x),g(x))+\sum_{j=1}^k\|f^{(j)}-g^{(j)}\|$.
\end{center}
\vspace{.3cm}

We introduce some more permanent notation.  Define a map $\phi_1:\Diff_+^1(M^1)\rightarrow C[0,1]^\#$ by\\

\begin{center} $\phi_1(f)=\log f'-\log f'(0)$.
\end{center}
\vspace{.3cm}

Note that for each $k\geq 2$, the restriction of the mapping $\phi_1$ to the subgroup $\Diff_+^k(M^1)$ takes values in $C^{k-1}[0,1]^\#$.  For each $k\geq 2$ define a map $\phi_k:\Diff_+^k(M^1)\rightarrow C[0,1]$ by\\

\begin{center} $\phi_k(f)=(\phi_1(f))^{(k-1)}$.
\end{center}
\vspace{.3cm}

So we have defined $\phi_2$ to be the logarithmic derivative $\phi_2(f)=\frac{f''}{f'}$, and each $\phi_k(f)$ is just $(\phi_{k-1}(f))'$.  The maps $\phi_k$ serve as ``alternative derivatives'' for us that are better behaved than the usual ones.  Each map $\phi_k$ is continuous.

The following lemmas are elementary calculus that we will need later.

\begin{lemma} \label{lemma_mandrill}  For each $k\geq 2$, there exists a polynomial $Q_k$ in $2(k-1)$ variables so that\\

\begin{center} $f^{(k)}=Q_k(f',f'',...,f^{(k-1)},\phi_2(f),\phi_3(f),...,\phi_k(f))$
\end{center}
\vspace{.3cm}

\noindent for every $f\in \Diff_+^k(M^1)$.
\end{lemma}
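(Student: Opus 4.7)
The plan is to proceed by induction on $k \geq 2$, using only the chain rule and the fact that partial derivatives of a polynomial are again polynomials. The base case $k=2$ is immediate: from the definition $\phi_2(f) = f''/f'$ we obtain $f'' = f' \cdot \phi_2(f)$, so $Q_2(x_1, y_1) := x_1 y_1$ works, and this is a polynomial in the required $2(k-1) = 2$ variables.

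For the inductive step, suppose a polynomial $Q_{k-1}$ in $2(k-2)$ variables has been found with
\[
f^{(k-1)} = Q_{k-1}\bigl(f', f'', \ldots, f^{(k-2)},\; \phi_2(f), \phi_3(f), \ldots, \phi_{k-1}(f)\bigr).
\]
Differentiating both sides with respect to the domain variable, I would expand via the multivariable chain rule: the terms differentiating in the $x_j$ slot (corresponding to $f^{(j)}$ for $1 \leq j \leq k-2$) contribute $\frac{\partial Q_{k-1}}{\partial x_j} \cdot f^{(j+1)}$, which involves only the derivatives $f'', \ldots, f^{(k-1)}$; and the terms differentiating in the $y_j$ slot (corresponding to $\phi_{j+1}(f)$ for $1 \leq j \leq k-2$) contribute $\frac{\partial Q_{k-1}}{\partial y_j} \cdot \phi_{j+1}(f)'$. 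By the defining relation $\phi_m(f) = (\phi_1(f))^{(m-1)}$ we have $\phi_{j+1}(f)' = \phi_{j+2}(f)$, so these contributions involve only $\phi_2(f), \ldots, \phi_k(f)$.

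Collecting everything, the result is an expression for $f^{(k)}$ as a polynomial in exactly the allowed $2(k-1)$ variables $f', f'', \ldots, f^{(k-1)}, \phi_2(f), \ldots, \phi_k(f)$, since the partial derivatives $\frac{\partial Q_{k-1}}{\partial x_j}$ and $\frac{\partial Q_{k-1}}{\partial y_j}$ are themselves polynomials in the variables of $Q_{k-1}$ and hence polynomials in the inductive arguments. I do not foresee a genuine obstacle here; the only thing to watch is the indexing check that differentiation never pushes us past $f^{(k-1)}$ on the derivative side or past $\phi_k(f)$ on the logarithmic-derivative side, and both bounds are automatic from $j \leq k-2$.
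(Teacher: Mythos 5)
Your proof is correct and follows essentially the same route as the paper's: induction on $k$ with base case $f''=f'\phi_2(f)$, and an inductive step that differentiates the polynomial identity (the paper phrases this as ``applications of the product rule,'' which is the same computation as your multivariable chain rule expansion). Your version is more detailed and in particular makes explicit the indexing check that the new arguments stay within $f^{(k-1)}$ and $\phi_k(f)$, which the paper leaves implicit.
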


\begin{proof}  By induction on $k$.  For the base case, note that $f''=f'\phi_2(f)$, so setting $Q_1(x,y)=xy$, we are done.  Now the inductive case is just applications of the product rule.
\end{proof}

\begin{lemma} \label{lemma_echidna}  For each $k\geq 1$, there exists a polynomial $R_k$ in $k$ variables so that\\

\begin{center} $(g^{-1})^{(k)}=\left[\dfrac{R_k(g',g'',...,g^{(k)})}{(g')^{2k-1}}\right]\circ g^{-1}$
\end{center}
\vspace{.3cm}

\noindent for every $g\in\Diff^k_+(M^1)$.
\end{lemma}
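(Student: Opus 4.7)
The plan is to prove the lemma by induction on $k$, mimicking the structure of Lemma \ref{lemma_mandrill}. The statement is really just a careful bookkeeping of the classical higher-order inverse function formula, so there is no deep obstacle; the work is in verifying that the denominator exponent $2k-1$ propagates correctly.

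For the base case $k=1$, the inverse function rule gives $(g^{-1})'(y) = 1/g'(g^{-1}(y))$, so taking $R_1(x_1) \equiv 1$ we have $(g^{-1})' = \bigl[R_1(g')/(g')^{1}\bigr]\circ g^{-1}$, which matches the claimed form with $2k-1=1$.

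For the inductive step, assume the formula holds for some $k\geq 1$. Differentiating with respect to $y$ and applying the chain rule,
\begin{equation*}
(g^{-1})^{(k+1)}(y) = \left[\frac{d}{dx}\frac{R_k(g'(x),\ldots,g^{(k)}(x))}{(g'(x))^{2k-1}}\right]_{x=g^{-1}(y)} \cdot (g^{-1})'(y).
\end{equation*}
The outer derivative is computed by the quotient rule: the numerator derivative $\frac{d}{dx}R_k(g',\ldots,g^{(k)})$ equals $\sum_{i=1}^{k}(\partial_i R_k)(g',\ldots,g^{(k)})\cdot g^{(i+1)}$, which is itself a polynomial $P_k$ in $g',g'',\ldots,g^{(k+1)}$. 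Combining with the derivative of the denominator $(g')^{2k-1}$ (which produces a factor of $g''$) and simplifying yields an expression of the form
\begin{equation*}
\frac{d}{dx}\frac{R_k(g',\ldots,g^{(k)})}{(g')^{2k-1}} = \frac{g'\cdot P_k(g',\ldots,g^{(k+1)}) - (2k-1)\,g''\cdot R_k(g',\ldots,g^{(k)})}{(g')^{2k}}.
\end{equation*}
Multiplying by $(g^{-1})'(y) = 1/(g'\circ g^{-1})$ precomposed with $g^{-1}$ raises the denominator to $(g')^{2k+1} = (g')^{2(k+1)-1}$, as required. So defining
\begin{equation*}
R_{k+1}(x_1,\ldots,x_{k+1}) := x_1\cdot P_k(x_1,\ldots,x_{k+1}) - (2k-1)\,x_2\cdot R_k(x_1,\ldots,x_k)
\end{equation*}
gives a polynomial in $k+1$ variables making the formula hold for $k+1$.

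The only subtlety is the interplay of the chain rule (which introduces the extra factor of $1/g'\circ g^{-1}$ from $(g^{-1})'$) with the power in the denominator — it is precisely this factor that pushes $2k-1$ up to $2k+1$ rather than to $2k$, so the induction closes cleanly. No analytic or topological input is needed beyond smoothness of $g$ on $M^1$, and the same argument applies uniformly for $M^1 = I$ or $\mathbb{S}^1$ since the derivatives on $\mathbb{S}^1$ were defined via the lift and obey the usual calculus rules by the conventions established earlier.
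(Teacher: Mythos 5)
Your proof is correct and follows the same route as the paper: induction on $k$, with the base case $R_1\equiv 1$ from the inversion rule and the inductive step via the chain and quotient rules. In fact you carry out explicitly the computation the paper leaves to the reader, and your bookkeeping of the denominator exponent (the extra factor $1/(g'\circ g^{-1})$ pushing $(g')^{2k}$ up to $(g')^{2k+1}=(g')^{2(k+1)-1}$) checks out.
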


\begin{proof}  By induction on $k$.  Taking $R_1(x)=1$, we get the base case by the inversion rule $(g^{-1})'=\dfrac{1}{g'}\circ g^{-1}$.  The inductive case now follows from the product rule, quotient rule, and chain rule.
\end{proof}

\begin{rem}  The polynomials $Q_k$ in Lemma \ref{lemma_mandrill} may be computed using the Leibniz rule for $k$-th order derivatives of products.  The polynomials $R_k$ in Lemma \ref{lemma_echidna} are more difficult to compute, but it is also possible to derive them by using the classical Fa\'{a} di Bruno's formula for $k$-th order derivatives of compositions of functions.  We omit the details-- all we really care about is that $Q_k$ and $R_k$ are polynomials, thus continuous and bounded on compact sets.  We will use these facts in the proofs of Lemma \ref{lemma_muledeer} and Corollary \ref{cor_radishes}.
\end{rem}

\section{Characterizing the Relative Property (OB) in $\Diff_+^k(M^1)$}

The goal of this section is to show that a subset of $\Diff^k_+(M^1)$ has the relative property (OB) if and only if it has uniformly log-bounded first derivatives, and uniformly bounded higher order derivatives for each order greater than $1$.

The standard metrics $\rho_k$ on $\Diff_+^k(M^1)$ are cumbersome for establishing this fact.  So our strategy is to associate with each group $\Diff_+^k(M^1)$ a natural Banach space consisting of ``initial value problems,'' via certain continuous mappings $\Phi_k$ (see Figure 1 below).  These maps are homeomorphisms when $M^1=I$, but not injective nor surjective when $M^1=\mathbb{S}^1$.  We use the maps $\Phi_k$ to pull back the norm and define a corresponding continuous pseudometric on $\Diff_+^k(M^1)$, which we denote by $d_k$.  We have $d_k$ is a metric if $M^1=I$ but not if $M^1=\mathbb{S}^1$.  These pseudometrics $d_k$ turn out to be well suited for our computations.

\begin{center}
\begin{figure}
\begin{tabular}{ccc}
$\Diff_+^1(M^1)$ & $\xrightarrow{\Phi_1}$ & $C[0,1]^\#$\\ [2ex]
\rotatebox[origin=c]{90}{$\subseteq$} & & $\big\uparrow$\rlap{\tiny $\iota$}\\ [2ex]
$\Diff_+^2(M^1)$ & $\xrightarrow{\Phi_2}$ & $C[0,1]$\\ [2ex]
\rotatebox[origin=c]{90}{$\subseteq$} & & $\big\uparrow$\rlap{\tiny $\mathcal{I}$}\\ [2ex]
$\Diff_+^3(M^1)$ & $\xrightarrow{\Phi_3}$ & $C[0,1]\oplus\mathbb{R}$\\ [2ex]
\rotatebox[origin=c]{90}{$\subseteq$} & & $\big\uparrow$\rlap{\tiny $\mathcal{I}\oplus\id$}\\ [2ex]
$\Diff_+^4(M^1)$ & $\xrightarrow{\Phi_4}$ & $C[0,1]\oplus\mathbb{R}^2$\\ [2ex]
\rotatebox[origin=c]{90}{$\subseteq$} & & $\big\uparrow$\rlap{\tiny $\mathcal{I}\oplus\id\oplus\id$}\\ [2ex]
$\vdots$ & & $\vdots$
\end{tabular}
\vspace{.3cm}
\caption{Road map for studying (pseudo-) metrics on $\Diff_+^k(M^1)$.  All upward arrows in the right column are continuous injective linear maps which are never surjective, but which are Banach space isomorphisms onto their range.  The diagram commutes.}
\end{figure}
\end{center}
\vspace{.3cm}



We begin by setting $\Phi_1=\phi_1$ and $\Phi_2=\phi_2$.  For all $k\geq 3$, define maps $\Phi_k:\Diff_+^k(I)\rightarrow C[0,1]\times\mathbb{R}^{k-2}$ by\\

\begin{center} $\Phi_k(f)=(\phi_k(f),~\phi_{k-1}(f)(0),~\phi_{k-2}(f)(0),~...,~\phi_2(f)(0))$.
\end{center}
\vspace{.3cm}

So, for instance, $\Phi_4(f)=(\phi_4(f), \phi_3(f)(0), \phi_2(f)(0))$ for any $f\in\Diff_+^4(I)$.  We note that each $\Phi_k$ sends the identity element $e\in\Diff_+^k(M^1)$ to the zero vector.

It is clear that each map $\Phi_k$ is continuous.  So we define a continuous pseudometric $d_k$ on $\Diff_+^k(M^1)$ by the rule\\

\begin{center} $d_k(f,g)=\|\Phi_k(f)-\Phi_k(g)\|$.
\end{center}
\vspace{.3cm}

So every open $d_k$-ball in $\Diff_+^k(M^1)$ is an open set in $\Diff_+^k(M^1)$.  In case $M^1=I$, we can say more: by Lemma \ref{lemma_homeomorphism} below, $d_k$ is a compatible metric on $\Diff_+^k(I)$, which renders it isometric with the target Banach space of $\Phi_k$.

\begin{lemma} \label{lemma_homeomorphism} 
\begin{enumerate}
		\item The mapping $\Phi_1$ is a homeomorphism of $\Diff_+^k(I)$ onto $C^{k-1}[0,1]^\#$.
		\item For each $k\geq 2$, the mapping $\Phi_2$ is a homeomorphism of $\Diff_+^k(I)$ onto $C^{k-2}[0,1]$.  
		\item For each $k\geq 2$, the mapping $\Phi_k$ is a homeomorphism of $\Diff_+^k(I)$ onto $C[0,1]\times\mathbb{R}^{k-2}$.
\end{enumerate}
\end{lemma}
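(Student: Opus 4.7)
The plan is to prove part (1) directly by constructing an explicit continuous inverse for $\Phi_1=\phi_1$, and then to deduce (2) and (3) as formal consequences of (1) via Lemma \ref{lemma_iota} (equivalently, via the commutativity of the diagram in Figure 1).

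For (1), continuity of $\phi_1:(\Diff_+^k(I),\rho_k)\to C^{k-1}[0,1]^\#$ is immediate, since $\rho_k$ controls $f',f'',\dots,f^{(k)}$ in sup-norm and $\log$ is smooth on $(0,\infty)$. For the inverse, given $g\in C^{k-1}[0,1]^\#$, set
\[
f(x)=\frac{\int_0^x e^{g(t)}\,dt}{\int_0^1 e^{g(s)}\,ds}.
\]
Using $g(0)=0$, a direct computation yields $\phi_1(f)=g$, and one checks $f$ is a $C^k$-diffeomorphism of $I$ fixing endpoints (with $f'(x)=e^{g(x)}/\int_0^1 e^g$ strictly positive and of class $C^{k-1}$). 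Continuity of $g\mapsto f$ into $\rho_k$ reduces to the continuity of composition with $\exp$ as a map $C^{k-1}\to C^{k-1}$, continuity of antidifferentiation $C^{k-1}\to C^k$, and the fact that $g\mapsto 1/\int_0^1 e^{g(t)}\,dt$ is continuous and strictly positive.

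For (2), observe that $\phi_2(f)=(\phi_1(f))'$ and $\phi_1(f)$ vanishes at $0$, hence $\Phi_1(f)=\iota(\Phi_2(f))$. By Lemma \ref{lemma_iota}, $\iota:C^{k-2}[0,1]\to C^{k-1}[0,1]^\#$ is a Banach space isomorphism, so $\Phi_2=\iota^{-1}\circ\Phi_1$ is a homeomorphism by (1). For (3), iterating Lemma \ref{lemma_iota} yields a Banach space isomorphism
\[
\mathcal{J}:C[0,1]\oplus\mathbb{R}^{k-2}\;\longrightarrow\;C^{k-2}[0,1]
\]
obtained by $k-2$ successive antidifferentiations against the nested initial values. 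Commutativity of the diagram in Figure 1 unwinds to the identity $\phi_{j-1}(f)=\mathcal{I}(\phi_j(f),\phi_{j-1}(f)(0))$ for $3\leq j\leq k$, i.e.\ the fundamental theorem of calculus applied at each level, which is precisely the statement $\mathcal{J}\circ\Phi_k=\Phi_2$. Therefore $\Phi_k=\mathcal{J}^{-1}\circ\Phi_2$ is a homeomorphism by (2).

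The only real work lies in part (1), specifically in verifying that the explicit inverse $g\mapsto f$ above is continuous in the full $C^k$-topology induced by $\rho_k$; once that is confirmed, (2) and (3) are a formal diagram chase through Lemma \ref{lemma_iota}.
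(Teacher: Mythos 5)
Your proposal is correct and follows essentially the same route as the paper: the explicit inverse $F\mapsto \frac{1}{\int_0^1 e^{F}}\int_0^x e^{F(t)}\,dt$ for part (1), followed by factoring $\Phi_2=\iota^{-1}\circ\Phi_1$ and $\Phi_k$ as a composition of the inverses of the isomorphisms $\mathcal{I}\oplus\id\oplus\cdots\oplus\id$ from Lemma \ref{lemma_iota}. The only cosmetic difference is that you package the iterated antidifferentiations into a single isomorphism $\mathcal{J}$ rather than writing the chain of compositions explicitly.
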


\begin{proof}   It is easy to see that $\Phi_1$ is injective.  If $F\in C^{k-1}[0,1]^\#$ for $k\geq 1$, then one may set $C=\int_0^1\exp(F(t))dt$, and observe that $f=\Phi_1^{-1}(F)$ is given by the formula\\

\begin{center} $f(x)=\Phi_1^{-1}(F)(x)=\frac{1}{C}\int_0^x\exp(F(t))dt$ for $x\in[0,1]$.
\end{center}
\vspace{.3cm}

This map $f$ is increasing on $I$, with positive derivative $\frac{1}{C}\exp\circ F$ of class $C^{k-1}$, so $f$ is of class $C^k$.  It also fixes $0$ and $1$, so $f\in\Diff_+^k(I)$ and therefore the restriction mapping $\Phi_1|_{\Diff_+^k(I)}$ is in fact a bijection onto $C^{k-1}[0,1]^\#$.  It is also clear that $\Phi_1^{-1}|_{C^{k-1}[0,1]^\#}$ as we have displayed it is continuous with respect to the uniform $C^k$-topology on $\Diff_+^k(I)$ and the uniform $C^{k-1}$-topology on $C^{k-1}[0,1]^\#$.  So $\Phi_1$ is a homeomorphism of $\Diff_+^k(I)$ onto $C^{k-1}[0,1]^\#$ for each $k$ as claimed.

Also, note that $\Phi_2$ is exactly the composition mapping $\Phi_2=\iota^{-1}\circ\Phi_1$.  Since $\iota^{-1}$ is a homeomorphism from $C^1[0,1]^\#$ onto $C[0,1]$, we see that $\Phi_2$ is a composition of homeomorphisms and hence a homeomorphism.

Similarly, for each $k\geq 3$, the mapping $\Phi_k$ is exactly the composition mapping\\

\begin{center} $\Phi_k=(\mathcal{I}\oplus\underbrace{\id\oplus\dots\oplus\id}_{k-3})^{-1}\circ\dots\circ(\mathcal{I}\oplus\id)^{-1}\circ\mathcal{I}^{-1}\circ\iota^{-1}\circ \Phi_1$.
\end{center}
\vspace{.3cm}

\noindent So each $\Phi_k$ is a composition of homeomorphisms and hence a homeomorphism.


\end{proof}

\begin{lemma} \label{lemma_muskox}  Let $f\in\Diff_+^k(M^1)$ and $\delta>0$.  If $d_1(f,e)<\delta$, then $e^{-2\delta}\leq f'\leq e^{2\delta}$ and $e^{-2\delta}\leq (f^{-1})'\leq e^{2\delta}$.
\end{lemma}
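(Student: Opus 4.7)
The plan is to unwind the definition of $d_1$ directly and then use an averaging argument on $f'$.

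First, I would observe that $\Phi_1 = \phi_1$ sends the identity $e$ to $0$ (since $e'\equiv 1$ gives $\log e' - \log e'(0) \equiv 0$). Hence
\[
d_1(f,e) = \|\phi_1(f)\| = \sup_{x\in M^1}\bigl|\log f'(x) - \log f'(0)\bigr|,
\]
and the hypothesis $d_1(f,e)<\delta$ immediately yields $|\log f'(x) - \log f'(0)| < \delta$ for every $x\in M^1$. This controls the \emph{oscillation} of $\log f'$, but not its actual value; we still need to pin down $\log f'(0)$.

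For that, I would use that in both cases $M^1=I$ and $M^1=\mathbb{S}^1$ the derivative integrates to one, i.e.\ $\int_0^1 f'(t)\,dt = 1$. (In the circle case this is $\tilde f(1)-\tilde f(0)=1$ for the lift, by condition (2) of the lift definition.) Since $f'$ is continuous and positive on the compact interval $[0,1]$ with integral $1$, the mean value theorem for integrals (or simply an extreme-value/IVT argument: the minimum cannot exceed $1$ and the maximum cannot fall below $1$) provides some $c\in[0,1]$ with $f'(c)=1$, hence $\log f'(c)=0$. Applying the oscillation bound at $x=c$ gives $|\log f'(0)| = |\log f'(c) - \log f'(0)| < \delta$.

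Combining the two estimates by the triangle inequality yields, for every $x\in M^1$,
\[
|\log f'(x)| \leq |\log f'(x)-\log f'(0)| + |\log f'(0)| < 2\delta,
\]
so $e^{-2\delta}\leq f'(x)\leq e^{2\delta}$ (the non-strict inequality being an immediate relaxation of the strict one). For the second assertion I would just invoke the inversion rule $(f^{-1})'(x) = 1/f'(f^{-1}(x))$; since the bound $e^{-2\delta}\leq f' \leq e^{2\delta}$ holds pointwise on all of $M^1$, taking reciprocals gives exactly $e^{-2\delta}\leq (f^{-1})'\leq e^{2\delta}$. There is no real obstacle here — the only conceptually nontrivial point is recognizing that the normalization $\int_0^1 f'=1$ converts the oscillation bound supplied by $d_1$ into a genuine two-sided bound on $f'$; everything else is one line.
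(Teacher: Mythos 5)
Your proof is correct, and it reaches the key estimate $|\log f'(0)|<\delta$ by a route that differs from the paper's in an interesting way. Both arguments ultimately exploit the same normalization $\int_0^1 f'(t)\,dt=1$ (which holds for $I$ because $f$ fixes the endpoints, and for $\mathbb{S}^1$ because the lift satisfies $\tilde f(1)-\tilde f(0)=1$), but the paper extracts it indirectly: it invokes the explicit inversion formula $f=\Phi_1^{-1}(F)$ from Lemma \ref{lemma_homeomorphism}, differentiates to get $f'(0)=1/\int_0^1\exp(F(t))\,dt$, and bounds the integral using $\|F\|<\delta$ together with $F(0)=0$. You instead observe directly that a continuous positive derivative with unit integral must equal $1$ at some point $c$, and then apply the oscillation bound $|\log f'(c)-\log f'(0)|<\delta$ at that point. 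The two computations are equivalent (the paper's integral $\int_0^1\exp(F)=\int_0^1 f'/f'(0)=1/f'(0)$ is precisely your normalization in disguise), but your version is self-contained and does not lean on the formula for $\Phi_1^{-1}$, which makes the mechanism --- converting an oscillation bound into a two-sided absolute bound via the unit-integral constraint --- more transparent. Your handling of the inverse via $(f^{-1})'=\frac{1}{f'}\circ f^{-1}$ matches the paper's. One cosmetic note: the paper's stated conclusion $e^{-2\delta}\leq f'\leq e^{-2\delta}$ contains an obvious sign typo; your strict-to-non-strict relaxation is the intended reading.
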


\begin{proof}  Let $F=\Phi_1(f)\in C[0,1]^\#$, so by hypothesis $\|F\|<\delta$.  We gave a formula for $\Phi_1^{-1}$ in the proof of Lemma \ref{lemma_homeomorphism}, where we regard $\Phi_1^{-1}$ as a mapping from $C[0,1]^\#$ to $\Diff_+^1(I)$.  If $M^1=I$, then $f=\Phi_1^{-1}(F)$, and if $M^1=\mathbb{S}^1$, then $f'([x])$ agrees with $(\Phi_1^{-1}(F))'(x)$ at each $x\in I$.

By taking the derivative of the formula for $\Phi_1^{-1}$, we compute that\\

\begin{center} $f'(0)=\dfrac{1}{\int_0^1\exp(F(t))dt}$.
\end{center}
\vspace{.3cm}

Now $\|F\|<\delta$ implies that $e^{-\delta}<\|\exp(F)\|<e^\delta$, and therefore $e^{-\delta}\leq\int_0^1\exp(F(t))dt\leq e^\delta$.  So reciprocating, we get $e^{-\delta}\leq f'(0)\leq e^\delta$.  By hypothesis we also have $e^{-\delta}<\frac{f'}{f'(0)}<e^\delta$.  Combining the two inequalities, we get $e^{-2\delta}\leq f'\leq e^{-2\delta}$.  The same inequality follows immediately for $(f^{-1})'$ by the inverse rule for derivatives.
\end{proof}

\begin{lemma} \label{lemma_garbanzo}  For each $k\geq 2$ and $j<k$, the identity mapping $\id:(\Diff_+^k(M^1),d_k)\rightarrow(\Diff_+^k(M^1),d_j)$ is $2^{k-j}$-Lipschitz at the identity element $e$.
\end{lemma}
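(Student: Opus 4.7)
The plan is to reduce $d_j$ to $d_k$ one step at a time, exploiting the fact that consecutive pseudometrics in the tower are related by the antiderivative maps depicted in Figure 1, and that each such step costs a multiplicative factor of at most $2$. Since $\Phi_k(e) = 0$, we have $d_k(f,e) = \|\Phi_k(f)\|$, so the task reduces to comparing $\|\Phi_j(f)\|$ and $\|\Phi_k(f)\|$ for $f$ in a neighborhood of $e$.

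The key identity is $\phi_{k-1}(f) = \mathcal{I}(\phi_k(f),\,\phi_{k-1}(f)(0))$, which is immediate from the fundamental theorem of calculus together with the defining relation $\phi_k(f) = (\phi_{k-1}(f))'$. Combining this with the estimate $\|\mathcal{I}(g,b)\|\leq\|g\|+|b|$ recorded in Section 2 yields
\[
\|\phi_{k-1}(f)\|\;\leq\;\|\phi_k(f)\|+|\phi_{k-1}(f)(0)|\;\leq\;2\|\Phi_k(f)\|,
\]
since both $\phi_k(f)$ and $\phi_{k-1}(f)(0)$ appear as components of $\Phi_k(f)$. The remaining components of $\Phi_{k-1}(f)$, namely $\phi_{k-2}(f)(0),\ldots,\phi_2(f)(0)$, also appear as components of $\Phi_k(f)$ and are therefore individually bounded by $\|\Phi_k(f)\|$. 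Taking the max shows $d_{k-1}(f,e)\leq 2\,d_k(f,e)$ for all $k\geq 3$. An entirely analogous argument for $\Phi_2(f)=\phi_2(f)=\mathcal{I}(\phi_3(f),\phi_2(f)(0))$ handles the step $d_2\leq 2 d_3$. Finally, for the bottom step $d_1\leq d_2$ one uses that $\phi_1(f)(0)=0$, so $\phi_1(f)=\iota(\phi_2(f))$ and the antiderivative map in this case has operator norm only $1$, not $2$.

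Iterating these one-step inequalities at most $(k-j)$ times produces the desired estimate $d_j(f,e)\leq 2^{k-j}d_k(f,e)$. There is no serious obstacle: the argument is bookkeeping with the antiderivative operator $\mathcal{I}$ and the max-norm structure on $C[0,1]\oplus\mathbb{R}^{k-2}$. The only point that requires a moment's care is the circle case, where all derivatives are taken of the lift $\tilde{f}$; here one uses the periodicity relations $\phi_j(f)^{(i)}(0)=\phi_j(f)^{(i)}(1)$ to ensure that the $\phi_j(f)$ descend to well-defined continuous functions on $\mathbb{S}^1$, so that the same antiderivative identity applies. I note in passing that the argument actually delivers the bound globally rather than only at $e$, because $\Phi_k(f)-\Phi_k(g)$ obeys the same linear antiderivative relations as $\Phi_k(f)$ alone; however, Lipschitz-at-$e$ is all that the later applications (Lemma \ref{lemma_muledeer} and Corollary \ref{cor_radishes}) require.
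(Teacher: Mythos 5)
Your proposal is correct and follows essentially the same route as the paper: both arguments reduce $d_{k-1}$ to $d_k$ via the antiderivative identity $\phi_{k-1}(f)=\mathcal{I}(\phi_k(f),\phi_{k-1}(f)(0))$, the bound $\|\mathcal{I}\|\leq 2$, and the max-norm structure of $C[0,1]\oplus\mathbb{R}^{k-2}$, then iterate (the paper phrases the iteration as an induction on $k$). Your observations that the bottom step $d_1\leq d_2$ actually has constant $1$ and that linearity gives the global Lipschitz bound are both valid refinements, though neither is needed for the stated claim.
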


\begin{proof}  We show the result for $j=k-1$ by induction on $k$, whence the result for general $j<k$ follows immediately.

For $k=2$, we note that for every $f\in\Diff_+^2(M^1)$, $d_1(f,e)=\|\phi_1(f)\|\leq\|\phi_1(f)\|_1=\|\iota(\phi_2(f))\|_1$.  Now $\|\iota\|\leq 2$ by the remarks preceding Lemma \ref{lemma_iota}, and therefore $d_1(f,e)\leq2\|\phi_2(f)\|=2d_2(f,e)$, which proves the base case.  Similarly, for the inductive case, we note that if $f\in\Diff_+^k(M^1)$ with $k\geq3$, we have 

\begin{align*}
d_{k-1}(f,e) &= \|(\phi_{k-1}(f),\phi_{k-2}(f)(0),...,\phi_2(f)(0))\|\\
&= \max\{\|\phi_{k-1}(f)\|,\|(\phi_{k-2}(f)(0),...,\phi_2(f)(0))\|\}\\
&\leq \max\{\|\phi_{k-1}(f)\|_1,\|(\phi_{k-2}(f)(0),...,\phi_2(f)(0))\|\}\\
&\leq \max\{\|\mathcal{I}(\phi_k(f),\phi_{k-1}(f)(0))\|_1,\|\Phi_k(f)\|\}\\
&\leq \max\{\|\mathcal{I}\|\|(\phi_k(f),\phi_{k-1}(f)(0)\|,\|\Phi_k(f)\|\}\\
&\leq \max\{2\|\Phi_k(f)\|,\|\Phi_k(f)]\|\}\\
&= 2\|\Phi_k(f)\|\\
&= 2d_k(f,e).
\end{align*}
\end{proof}

\begin{lemma} \label{lemma_muledeer}  For each $k\geq 1$, the identity mapping $\id:(\Diff_+^k(M^1),d_k)\rightarrow(\Diff_+^k(M^1),\rho_k)$ takes $d_k$-bounded sets to $\rho_k$-bounded sets.
\end{lemma}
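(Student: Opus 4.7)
The plan is to reduce $\rho_k$-boundedness on $A$ to uniform bounds on the ordinary derivatives $f^{(j)}$ for $f\in A$ and $1\leq j\leq k$, obtaining these from uniform bounds on the alternative derivatives $\phi_j(f)$ via Lemma \ref{lemma_mandrill}.

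First, suppose $A\subseteq\Diff_+^k(M^1)$ has finite $d_k$-diameter. Fixing any $f_0\in A$ and invoking the triangle inequality for the pseudometric $d_k$, I obtain $\sup_{f\in A}d_k(f,e)<\infty$; applying Lemma \ref{lemma_garbanzo} for each $1\leq j\leq k$ then yields $\sup_{f\in A}d_j(f,e)<\infty$ for all such $j$. Since $\|\phi_j(f)\|$ appears as a component of $\Phi_j(f)$ with respect to the $\ell^\infty$-norm (and equals $\|\Phi_j(f)\|$ itself for $j\in\{1,2\}$), we have $\|\phi_j(f)\|\leq\|\Phi_j(f)\|=d_j(f,e)$, delivering a uniform bound $M_j:=\sup_{f\in A}\|\phi_j(f)\|<\infty$ for every $1\leq j\leq k$.

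Next, the bound $M_1$ on $\|\phi_1(f)\|$ lets me apply Lemma \ref{lemma_muskox} to conclude that $f'$ is uniformly bounded above and uniformly bounded away from zero on $A$. I then proceed by induction on $j$ to show $\sup_{f\in A}\|f^{(j)}\|<\infty$ for each $1\leq j\leq k$. The base case $j=1$ is the preceding sentence. For the inductive step, Lemma \ref{lemma_mandrill} expresses $f^{(j)}$ as a polynomial $Q_j$ in $f',f'',\ldots,f^{(j-1)}$ and $\phi_2(f),\ldots,\phi_j(f)$; each of these inputs ranges over a bounded subset of $\mathbb{R}$ (by the inductive hypothesis and by $M_2,\ldots,M_j$), and since $Q_j$ is continuous, it is bounded on the corresponding compact box, giving the required uniform bound on $\|f^{(j)}\|$. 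Assembling everything, $\rho_k(f,e)=\sup_{x\in M^1}d_{M^1}(f(x),x)+\|f'-1\|+\sum_{j=2}^k\|f^{(j)}\|$, whose first summand is at most $\diam(M^1)<\infty$ and whose remaining summands are uniformly bounded on $A$, so $A$ is $\rho_k$-bounded.

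I do not anticipate any serious obstacles. The only mildly delicate point is extracting uniform bounds on every intermediate $\|\phi_j(f)\|$ from the single bound on $d_k$; this is exactly what Lemma \ref{lemma_garbanzo} was engineered to do, after which the passage from the alternative derivatives $\phi_j(f)$ to the ordinary derivatives $f^{(j)}$ is a routine polynomial estimate supplied by Lemma \ref{lemma_mandrill}.
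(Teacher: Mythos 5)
Your proposal is correct and follows essentially the same route as the paper's proof: reduce to bounds on $d_j(f,e)$ for all $j\leq k$ via Lemma \ref{lemma_garbanzo}, control $f'$ with Lemma \ref{lemma_muskox}, and bound the higher derivatives inductively by evaluating the polynomials of Lemma \ref{lemma_mandrill} on a compact box. The only difference is organizational (the paper runs its induction on $k$ at the level of the lemma statement, while you run an inner induction on the derivative order), which does not change the substance.
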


\begin{proof}  It suffices to show that for every $\delta>0$, there is $\epsilon>0$ so that $B_{d_k}(e,\delta)\subseteq B_{\rho_k}(e,\epsilon)$.

We proceed by induction on $k$.  First consider $k=1$ and let $\delta>0$ be given.  Let $\epsilon>2+e^{2\delta}-1$.  If $f\in\Diff_+^1(M^1)$ satisfies $d_1(f,e)<\delta$, then $||\phi_1(f)||<\delta$, whence $e^{-2\delta}<f'<e^{2\delta}$ by Lemma \ref{lemma_muskox}.  So $e^{-2\delta}-1<f'-1<e^{2\delta}-1$, whence $\rho_1(f,e)=\displaystyle\sup_{x\in M^1}d_{M^1}(f(x),x)+\|f'-1\|\leq \diam_{d_M^1}(M^1)+e^{2\delta}-1\leq 2+e^{2\delta}-1<\epsilon$.  Therefore $E\subseteq B_{\rho_1}(e,\epsilon)$, proving the base case.

Now fix some $k\geq 2$, and inductively suppose that every $d_j$-bounded set is also $\rho_j$-bounded, for every $j<k$.  Let $\delta>0$ be given.  Then $B_{d_k}(e,\delta)\subseteq B_{d_j}(e,2^{k-j}\delta)$ by the Lipschitz property of the identity mapping with respect to these metrics (Lemma \ref{lemma_garbanzo}).  So our inductive hypothesis says there is a $\gamma>2^k\delta$ so that $B_{d_k}(e,\delta)\subseteq B_{\rho_j}(e,\gamma)$, for every $j<k$.

Let $Q_k$ be the polynomial in $2(k-1)$ variables guaranteed by Lemma \ref{lemma_mandrill}, and let $\epsilon>0$ be large enough that $\epsilon>2\gamma$ and

\begin{center} $\epsilon>2\cdot\displaystyle\sup_{\vec{x}\in[-\gamma,\gamma]^{2(k-1)}}|Q_k(\vec{x})|$.
\end{center}
\vspace{.3cm}

If $f\in\Diff_+^k(M^1)$ satisfies $d_k(f,e)<\delta$, then $d_j(f,e)<2^k\delta<\gamma$ for all $1\leq j\leq k$, and $\rho_j(f,e)<\gamma$ for all $1\leq j< k$ by the inductive hypothesis.  In particular $\|f^{(j)}\|<\gamma$ for all $1\leq j<k$.  It follows then from Lemma \ref{lemma_mandrill} and our choice of $\epsilon$ that 

\begin{align*}
\|f^{(k)}\| &= \displaystyle\sup_{x\in M^1}|f^{(k)}(x)|\\
&= \displaystyle\sup_{x\in M^1}|Q_k(f,f',...,f^{(k-1)},\phi_2(f),...,\phi_k(f))|(x)\\
&= \displaystyle\sup_{x\in M^1}|Q_k(f(x),f'(x),...,f^{(k-1)}(x),\phi_2(f)(x),...,\phi_k(f)(x))|\\
&\leq \displaystyle\sup_{x\in M^1}\epsilon/2=\epsilon/2.
\end{align*}
\vspace{.3cm}


Also $\rho_{k-1}(f,e)<\gamma<\epsilon/2$, so $\rho_k(f,e)=\rho_{k-1}(f,e)+\|f^{(k)}\|<\epsilon$.  This concludes the inductive step and the proof.
\end{proof}

\begin{cor} \label{cor_radishes}  For each $k\geq 1$, the inverse mapping $h\mapsto h^{-1}$, $(\Diff_+^k(M^1),d_k)\rightarrow(\Diff_+^k(M^1),\rho_k)$ takes $d_k$-bounded sets to $\rho_k$-bounded sets.
\end{cor}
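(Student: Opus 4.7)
The plan is to mirror the strategy used in the proof of Lemma \ref{lemma_muledeer}, but with Lemma \ref{lemma_echidna} playing the role that Lemma \ref{lemma_mandrill} played before. Let $A\subseteq\Diff_+^k(M^1)$ be $d_k$-bounded, with $d_k$-diameter at most some $\delta>0$; we want to bound $\rho_k(h^{-1},e)$ uniformly for $h\in A$.

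First I would harvest all the bounds we already have for elements of $A$. By Lemma \ref{lemma_garbanzo} and right-invariance considerations, $A$ is $d_j$-bounded for every $1\leq j\leq k$; in particular it is $d_1$-bounded, so Lemma \ref{lemma_muskox} supplies constants $0<c\leq C$ (depending only on $\delta$) with $c\leq h'\leq C$ for every $h\in A$. By Lemma \ref{lemma_muledeer} applied to $A$, we also obtain a constant $M>0$ with $\|h^{(j)}\|\leq M$ for every $h\in A$ and every $0\leq j\leq k$. (Here I am silently using that $A$ being $d_k$-bounded around $e$ translates into bounds around $e$ on all the $h^{(j)}$.)

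Now I would apply Lemma \ref{lemma_echidna} term by term. For each $1\leq j\leq k$, the lemma yields
\begin{equation*}
(h^{-1})^{(j)} = \left[\frac{R_j(h',h'',\ldots,h^{(j)})}{(h')^{2j-1}}\right]\circ h^{-1}.
\end{equation*}
Since $R_j$ is a polynomial (hence continuous) and the tuple $(h',\ldots,h^{(j)})$ takes values in the compact box $[-M,M]^j$, the numerator is bounded by a constant depending only on $M$ and $R_j$. The denominator $(h')^{2j-1}$ is bounded below by $c^{2j-1}>0$. Composing with $h^{-1}$ has no effect on supremum norms, so $\|(h^{-1})^{(j)}\|$ is bounded by a constant $N_j$ depending only on $c,M,R_j$, uniformly for $h\in A$.

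Finally I would assemble the pieces. The translational piece is trivial: $\sup_{x\in M^1}d_{M^1}(h^{-1}(x),x)\leq\diam_{d_{M^1}}(M^1)\leq 2$. Combining this with $\|(h^{-1})'-1\|\leq N_1+1$ and $\|(h^{-1})^{(j)}\|\leq N_j$ for $2\leq j\leq k$, we conclude that
\begin{equation*}
\rho_k(h^{-1},e) \leq 2 + (N_1+1) + \sum_{j=2}^k N_j,
\end{equation*}
a bound independent of $h\in A$. So $\{h^{-1}:h\in A\}$ is $\rho_k$-bounded, as required. I do not expect a substantive obstacle here: all the real work was done in Lemmas \ref{lemma_muskox}, \ref{lemma_garbanzo}, \ref{lemma_muledeer}, and \ref{lemma_echidna}, and the corollary is essentially a bookkeeping exercise built on top of them.
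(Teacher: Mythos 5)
Your proposal is correct and follows essentially the same route as the paper's proof: use Lemma \ref{lemma_muledeer} to bound $\|h^{(j)}\|$, Lemmas \ref{lemma_garbanzo} and \ref{lemma_muskox} to bound $h'$ away from zero, and then bound the formula of Lemma \ref{lemma_echidna} by the supremum of the polynomial $R_j$ on a compact box divided by the lower bound on $(h')^{2j-1}$. The only (harmless) cosmetic difference is that you work with an arbitrary $d_k$-bounded set rather than with balls $B_{d_k}(e,\delta)$ about the identity.
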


\begin{proof}  As in the previous proof, it suffices to show that for every $\delta>0$, there is $\epsilon>0$ so that $(B_{d_k}(e,\delta))^{-1}\subseteq B_{\rho_k}(e,\epsilon)$.

Let $k\geq 1$ and let $\delta>0$ be given.  By the previous Lemma \ref{lemma_muledeer}, there is $\gamma>0$ so that $B_{d_k}(e,\delta)\subseteq B_{\rho_j}(e,\gamma)$ for all $j\leq k$.  For each $j\leq k$, let $R_j$ be the polynomial in $j$ variables guaranteed by Lemma \ref{lemma_echidna}.  Let $\epsilon>0$ be so large that $\epsilon/(k+1)>\max\{e^{2^k\delta},\diam_{d_{M^1}}(M^1)\}$, and\\

\begin{center} $\epsilon/(k+1)>\displaystyle\max_{1\leq j\leq k}\sup_{\vec{x}\in[-(\gamma+1),\gamma+1]^j}\dfrac{|R_j(\vec{x})|}{(e^{-2^k\delta})^{2j-1}}$.
\end{center}
\vspace{.3cm}

Now if $h\in\Diff_+^k(M^1)$ satisfies $d_k(h,e)<\delta$, then $\rho_k(h,e)<\gamma$ and therefore in particular $\|h'\|,\|h''\|,...,\|h^{(k)}\|<\gamma+1$.  By the Lipschitz property of the mapping $\id:(\Diff_+^k(M^1),d_k)\rightarrow(\Diff_+^k(M^1),d_1)$ (Lemma \ref{lemma_garbanzo}), we also have $d_1(h,e)<2^{k-1}\delta$, and thus by Lemma \ref{lemma_muskox} we get $e^{-2^k\delta}<(h^{-1})'<e^{2^k\delta}$.  Thus $\|(h^{-1})'-1\|<e^{2^k\delta}<\epsilon/(k+1)$.  Now it also follows from Lemma \ref{lemma_echidna} and our choice of $\epsilon$ that 

\begin{align*}
\|(h^{-1})^{(j)}\| &= \displaystyle\sup_{x\in M^1}|(h^{-1})^{(j)}(x)|\\
&= \displaystyle\sup_{x\in M^1}\left|\dfrac{R_j(h',h'',...,h^{(j)})}{(h')^{2j-1}}\circ h^{-1}\right|(x)\\
&= \displaystyle\sup_{x\in M^1}\left|\dfrac{R_j(h',h'',...,h^{(j)})}{(h')^{2j-1}}\right|(x)\\
&= \displaystyle\sup_{x\in M^1}\dfrac{|R_j(h'(x),h''(x),...,h^{(j)}(x))|}{(e^{-2^k\delta})^{2j-1}}\\
&\leq \displaystyle\sup_{x\in M^1}\epsilon/(k+1) = \epsilon/(k+1)
\end{align*}
\vspace{.3cm}

\noindent for all $2\leq j\leq k$.  So we have $\rho_k(h^{-1},e)=\displaystyle\sup_{x\in M^1}d_{M^1}(h^{-1}(x),x)+\|(h^{-1})'-1\|+\displaystyle\sum_{j=2}^k\|(h^{-1})^{(j)}\|<\epsilon/(k+1)+\epsilon/(k+1)+(k-1)\cdot\epsilon/(k+1)=\epsilon$.  Since $h$ was arbitrary, we have shown $(B_{d_k}(e,\delta))^{-1}\subseteq B_{\rho_k}(e,\epsilon)$ as claimed.
\end{proof}






\begin{lemma} \label{lemma_wombat}  For all $f,h\in\Diff_+^k(M^1)$, $\phi_1(fh^{-1})=[\phi_1(f)-\phi_1(h)]\circ h^{-1}$.  For each $k\geq 2$, there exist polynomials $P^k_2,P^k_3,...,P^k_k$ in $1, 2,..., k-1$ variables respectively, with the property that for all $f,h\in\Diff_+^k(M^1)$,

\begin{align*}
\phi_k(fh^{-1}) = & [\phi_k(f)-\phi_k(h)]\circ h^{-1}\cdot P^k_2((h^{-1})')\\
&+ [\phi_{k-1}(f)-\phi_{k-1}(h)]\circ h^{-1}\cdot P^k_3((h^{-1})',(h^{-1})'')\\
&+ ...\\
&+ [\phi_3(f)-\phi_3(h)]\circ h^{-1}\cdot P^k_{k-1}((h^{-1})',(h^{-1})'',...,(h^{-1})^{(k-2)})\\
&+ [\phi_2(f)-\phi_2(h)]\circ h^{-1}\cdot P^k_k((h^{-1})',(h^{-1})'',...,(h^{-1})^{(k-1)}).
\end{align*}
\end{lemma}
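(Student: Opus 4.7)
The proof goes by induction on $k$. The key identity, established first, is the base case for $\phi_1$: apply the chain rule together with $(h^{-1})'=1/(h'\circ h^{-1})$ to obtain $\log(fh^{-1})'=(\log f'-\log h')\circ h^{-1}$, and then subtract the value at $0$ to recover $\phi_1(fh^{-1})=[\phi_1(f)-\phi_1(h)]\circ h^{-1}$. For $M^1=I$ this is immediate from $h^{-1}(0)=0$; for $M^1=\mathbb{S}^1$ the two sides differ only by an additive constant measuring the discrepancy between $\log f'(0)-\log h'(0)$ and its counterpart at $h^{-1}(0)$, and this constant evaporates upon any further differentiation, so it has no bearing on the $\phi_k$ formulas for $k\geq 2$.

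The base case $k=2$ is obtained by differentiating the $\phi_1$ identity once: the chain rule together with $\phi_1'=\phi_2$ gives $\phi_2(fh^{-1})=[\phi_2(f)-\phi_2(h)]\circ h^{-1}\cdot(h^{-1})'$, so we may take $P^2_2(x_1)=x_1$.

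For the inductive step, suppose the formula holds at level $k\geq 2$ and differentiate $\phi_k(fh^{-1})=(\phi_{k+1}(fh^{-1}))\cdots$, i.e., use $\phi_{k+1}=\phi_k'$ and differentiate term by term. Write each summand as $A_j\cdot P^k_j$ with $A_j=[\phi_{k-j+2}(f)-\phi_{k-j+2}(h)]\circ h^{-1}$. The product rule gives $(A_jP^k_j)'=A_j'P^k_j+A_j(P^k_j)'$; the chain rule together with $\phi_m'=\phi_{m+1}$ yields
\[
A_j'=[\phi_{k-j+3}(f)-\phi_{k-j+3}(h)]\circ h^{-1}\cdot(h^{-1})',
\]
and since $P^k_j$ is polynomial in $(h^{-1})',\ldots,(h^{-1})^{(j-1)}$, its $x$-derivative is polynomial in $(h^{-1})',\ldots,(h^{-1})^{(j)}$. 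Grouping by the index $j'$ in the target sum for $\phi_{k+1}(fh^{-1})$, the contribution $A_j'P^k_j$ (carrying a $\phi_{k+3-j}$-difference) lands at $j'=j$, while $A_j(P^k_j)'$ (carrying a $\phi_{k-j+2}=\phi_{(k+1)-(j+1)+2}$-difference) lands at $j'=j+1$. This produces the explicit recursion $P^{k+1}_2=(h^{-1})'\cdot P^k_2$, $P^{k+1}_{j'}=(h^{-1})'\cdot P^k_{j'}+(P^k_{j'-1})'$ for $3\leq j'\leq k$, and $P^{k+1}_{k+1}=(P^k_k)'$. A quick variable count in each of the three cases confirms that $P^{k+1}_{j'}$ is a polynomial in exactly $j'-1$ variables, completing the induction.

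The main obstacle is essentially bookkeeping: tracking how the index of the $\phi$-difference shifts under differentiation (via $\phi_m'=\phi_{m+1}$) in one branch of the product rule but is preserved in the other, and verifying that the recursion keeps each $P^{k+1}_{j'}$ in exactly the $j'-1$ allowed variables $(h^{-1})',\ldots,(h^{-1})^{(j'-1)}$. No new analytical idea is needed beyond iterated application of the product and chain rules.
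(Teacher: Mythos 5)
Your proof is correct and follows essentially the same route as the paper's: verify the $\phi_1$ identity directly from the chain rule, then induct on $k$ by differentiating term by term and sorting the product-rule contributions into the recursion $P^{k+1}_2=(h^{-1})'\,P^k_2$, $P^{k+1}_{j'}=(h^{-1})'\,P^k_{j'}+(P^k_{j'-1})'$ for $3\leq j'\leq k$, and $P^{k+1}_{k+1}=(P^k_k)'$, with the variable count checked at each stage. You also correctly handle a point the paper's ``easy computation'' glosses over: on $\mathbb{S}^1$ the $\phi_1$ identity holds only up to the additive constant $[\phi_1(f)-\phi_1(h)](h^{-1}(0))$ (which vanishes exactly when $h$ fixes $0$), and since that constant is annihilated by differentiation it has no effect on the asserted formulas for $\phi_k$ with $k\geq 2$.
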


\begin{proof}  The fact that $\phi_1(fh^{-1})=[\phi_1(f)-\phi_1(h)]\circ h^{-1}$ is an easy computation.  The longer claim is established by induction on $k$.  By the chain rule and the linearity of the derivative,\\

\begin{align*}
\phi_2(fh^{-1}) &= ([\phi_1(f)-\phi_1(h)]\circ h^{-1})'\\
&= [\phi_2(f)-\phi_2(h)]\circ h^{-1}\cdot (h^{-1})'.
\end{align*}
\vspace{.3cm}

Taking $P^2_2(x)=x$, this establishes the base case $k=2$.  The inductive case is once again a straightforward consequence of the chain rule, the product rule, and the linearity of the derivative.
\end{proof}







\begin{lemma} \label{lemma_main}  The pseudometric $d_1$ is right-invariant.  For each $k\geq 2$, there exists a non-decreasing function $A_k:(0,\infty)\rightarrow(0,\infty)$ with the property that for all $f,g,h\in\Diff_+^k(M^1)$,\\

\begin{center} $d_k(fh^{-1},gh^{-1})\leq A_k(\|\Phi_k(h)\|)\cdot d_k(f,g)$.
\end{center}
\vspace{.3cm}

In other words, right translation by $h^{-1}$ is Lipschitz with a constant depending only on the magnitude of $\Phi_k(h)$.
\end{lemma}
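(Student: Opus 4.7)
The right-invariance of $d_1$ is immediate from the first assertion of Lemma~\ref{lemma_wombat}: writing $\phi_1(fh^{-1})-\phi_1(gh^{-1})=[\phi_1(f)-\phi_1(g)]\circ h^{-1}$ and using that $h^{-1}$ is a bijection of $M^1$, the sup norm is preserved, so $d_1(fh^{-1},gh^{-1})=\|\phi_1(f)-\phi_1(g)\|=d_1(f,g)$.

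For $k\geq 2$, I will estimate each coordinate of $\Phi_k(fh^{-1})-\Phi_k(gh^{-1})$ separately, since
\begin{equation*}
d_k(fh^{-1},gh^{-1})=\max\Bigl\{\|\phi_k(fh^{-1})-\phi_k(gh^{-1})\|,\ |\phi_j(fh^{-1})(0)-\phi_j(gh^{-1})(0)| : 2\leq j\leq k-1\Bigr\}.
\end{equation*}
Lemma~\ref{lemma_wombat}, applied at level $k$ for the function-valued coordinate and at each level $j\leq k-1$ for the scalar coordinates (evaluated at $0$), decomposes every one of these quantities into a finite sum whose general term has the form $\bigl([\phi_i(f)-\phi_i(g)]\circ h^{-1}\bigr)\cdot P^{\ell}_m\bigl((h^{-1})',\ldots,(h^{-1})^{(m-1)}\bigr)$ with $2\leq i\leq\ell\leq k$ and $m\leq\ell\leq k$. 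My plan is to bound the two kinds of factors independently and recombine by the triangle inequality.

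For the first factor, I will show that $\|\phi_i(f)-\phi_i(g)\|\leq k\cdot d_k(f,g)$ for every $2\leq i\leq k$. Since $\phi_{i+1}(f)=(\phi_i(f))'$, iterated application of the fundamental theorem of calculus (using lifts in the circle case, where each $\phi_i$ is periodic) telescopes $\phi_i(f)(x)-\phi_i(g)(x)$ into $\phi_i(f)(0)-\phi_i(g)(0)$ plus iterated integrals terminating in $\phi_k(f)-\phi_k(g)$; every term arising is at most one coordinate of $\Phi_k(f)-\Phi_k(g)$ in absolute value, hence at most $d_k(f,g)$. For the polynomial factor, Corollary~\ref{cor_radishes} converts the bound $\|\Phi_k(h)\|\leq R$ into a $\rho_k$-bound on $h^{-1}$, and in particular into uniform bounds on $\|(h^{-1})^{(j)}\|$ for $1\leq j\leq k$ by a non-decreasing function $C(R)$. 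Since each $P^{\ell}_m$ is a polynomial hence bounded on compact sets, $\|P^{\ell}_m((h^{-1})',\ldots,(h^{-1})^{(m-1)})\|$ is dominated by a non-decreasing function of $\|\Phi_k(h)\|$. Taking the maximum of the resulting bounds over all components produces the desired non-decreasing $A_k$.

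\textbf{Main obstacle.} The chief difficulty is that $\Phi_k$ mixes one function-valued coordinate with several scalar coordinates, so Lemma~\ref{lemma_wombat} must be reapplied at each level $j\leq k$, and the indices in the nested decomposition tracked carefully. The other essential ingredient is the passage from a $d_k$-bound on $h$ to uniform bounds on the higher derivatives of $h^{-1}$; this is precisely what Corollary~\ref{cor_radishes} was designed to deliver, and it is what makes the argument go through without ever needing $(\Diff^k_+(M^1),d_k)$ itself to be a topological group under the operation $h\mapsto h^{-1}$.
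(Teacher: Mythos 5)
Your proposal is correct and follows essentially the same route as the paper's proof: both rest on the decomposition of Lemma~\ref{lemma_wombat} for the algebraic identity and on Corollary~\ref{cor_radishes} to convert the bound on $\|\Phi_k(h)\|$ into uniform, monotone bounds on the derivatives of $h^{-1}$ feeding the polynomials $P^\ell_m$. The only (harmless) organizational difference is that you handle the scalar coordinates of $\Phi_k$ by reapplying Lemma~\ref{lemma_wombat} at each level $j$ and evaluating at $0$, with the explicit telescoping bound $\|\phi_i(f)-\phi_i(g)\|\leq k\cdot d_k(f,g)$, whereas the paper packages the same estimates as an induction on $k$ via the comparison of the pseudometrics $d_j$.
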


\begin{proof}  By Lemma \ref{lemma_wombat}, for each $f,h\in\Diff_+^1(M^1)$ we have $d_1(f,h)=\|\phi_1(f)-\phi_1(h)\|=\|[\phi_1(fh^{-1})]\circ h^{-1}\|=\|\phi_1(fh^{-1})\|=\|0-\phi_1(fh^{-1})\|=d_1(e,fh^{-1})$.  It follows from this observation that for every $f,g,h\in\Diff_+^1(I)$, $d_1(fh,gh)=d_1(e,(fh)(hg)^{-1})=d_1(e,fg^{-1})=d_1(f,g)$ and thus $d_1$ is right-invariant as claimed.

Now suppose $k\geq 2$.  Using the fact that inversion takes $d_k$-bounded sets to $\rho_k$-bounded sets (Corollary \ref{cor_radishes}), we see that to each $\delta\in(0,\infty)$, one may assign a constant $\gamma_\delta$ for which $d_k(h,e)<\delta$ implies $\rho_k(h^{-1},e)<\gamma_\delta$.  Without loss of generality we may further assume that the mapping $\delta\mapsto\gamma_\delta$ is a non-decreasing function of $\delta$.  Now let $P^k_2,...,P^k_k$ be the polynomials guaranteed by Lemma \ref{lemma_wombat}.  Define $A_k$ as follows:

\begin{center} $A_k(\delta)=\displaystyle\sup_{x_2\in[-\gamma_\delta,\gamma_\delta]}|P^k_2(x_2)|+\displaystyle\sup_{\vec{x}_3\in[-\gamma_\delta,\gamma_\delta]^2}|P^k_3(\vec{x}_3)|+...+\displaystyle\sup_{\vec{x}_k\in[-\gamma_\delta,\gamma_\delta]^{k-1}}|P^k_k(\vec{x}_k)|$.
\end{center}
\vspace{.3cm}


It is clear from the definition that $A_k$ is non-decreasing, given that $\gamma_\delta$ is non-decreasing.  Now let $f,g,h\in\Diff_+^k(M^1)$ and fix $\delta=\|\Phi_k(h)\|=d_k(h,e)$.  By construction, $\rho_k(h,e)\leq \gamma_\delta$, and in particular $\|(h^{-1})'\|,\|(h^{-1})''\|,...,\|(h^{-1})^{(k)}\|\leq\gamma_\delta$.  Now it follows from Lemma \ref{lemma_wombat} that \\

\begin{align*}
\|\phi_k(fh^{-1})-\phi_k(gh^{-1})\| &= \left\|\displaystyle\sum_{i=1}^{k-1}[\phi_{k-i+1}(f)-\phi_{k-i+1}(h)]\circ h^{-1}\cdot P^k_{i+1}((h^{-1})',...,(h^{-1})^{(i)})\right.\\
& \text{\indent} \left.-\sum_{i=1}^{k-1}[\phi_{k-i+1}(g)-\phi_{k-i+1}(h)]\circ h^{-1}\cdot P^k_{i+1}((h^{-1})',...,(h^{-1})^{(i)})\right\|\\
&= \left\|\sum_{i=1}^{k-1}[\phi_{k-i+1}(f)-\phi_{k-i+1}(g)]\circ h^{-1}\cdot P^k_{i+1}((h^{-1})',...,(h^{-1})^{(i)})\right\|\\
&\leq \sum_{i=1}^{k-1}\|[\phi_{k-i+1}(f)-\phi_{k-i+1}(g)]\circ h^{-1}\|\cdot \sup_{\vec{x}_{i+1}\in[-\gamma_\delta,\gamma_\delta]^i}|P^k_{i+1}(\vec{x}_{i+1})|\\
&\leq \sum_{i=1}^{k-1} d_{k-i+1}(f,g)\cdot \sup_{\vec{x}_{i+1}\in[-\gamma_\delta,\gamma_\delta]^i}|P^k_{i+1}(\vec{x}_{i+1})|\\
&\leq d_k(f,g)\cdot A_k(\delta).
\end{align*}
\vspace{.3cm}

Now the hard work is over, and we can finish the proof with a quick induction on $k$.  For the base case $k=2$, if $f,g,h\in\Diff_+^2(M^1)$, we have $d_2(fh^{-1},gh^{-1})=\|\phi_2(fh^{-1})-\phi_2(gh^{-1})\|\leq A_2(\|\Phi_2(h)\|)d_2(f,g)$ by the above, and we are done.  For the inductive case, fix $k\geq 3$ and $h\in\Diff_+^k(I)$, and assume the mapping $f\mapsto fh^{-1}$, $(\Diff_+^j(I),d_j)\rightarrow(\Diff_+^j(I),d_j)$ is $A_j(\|\Phi_j(h)\|)$-Lipschitz for each $j<k$.  We may assume that $A_k\geq A_j$ for all $j<k$, by replacing $A_k$ with $\displaystyle\max_{1\leq j\leq k}A_j$ if necessary.  Then for any $f,g\in\Diff_+^k(M^1)$, by the inductive hypothesis and the computation above, we have\\

\begin{align*}
d_k(fh^{-1},gh^{-1}) &= \|\Phi_k(fh^{-1})-\Phi_k(gh^{-1})\|\\
&= \max\{\phi_k(fh^{-1})-\phi_k(gh^{-1}),[\phi_{k-1}(fh^{-1})-\phi_{k-1}(gh^{-1})](0),\\
& \dots,[\phi_2(fh^{-1})-\phi_2(gh^{-1})](0)\}\\
&\leq \max\{A_k(\|\Phi_k(h)\|)\cdot d_k(f,g),A_{k-1}(\|\Phi_{k-1}\|)\cdot d_{k-1}f,g),\\
& \dots,A_2(\|\Phi_2(h)\|)\cdot d_2(f,g)\}\\
&\leq A_k(\|\Phi_k(h)\|)\cdot d_k(f,g).
\end{align*}
\vspace{.3cm}

This demonstrates the inductive step.
\end{proof}

\begin{thm}\label{ob_character}  Let $1\leq k\leq\infty$.  A subset $A\subseteq\Diff_+^k(M^1)$ has the relative property (OB) if and only if $\displaystyle\sup_{f\in A}\sup_{x\in M^1}|\log f'(x)|<\infty$ and $\displaystyle\sup_{f\in A}\sup_{x\in M^1}|f^{(j)}(x)|<\infty$ for every integer $2\leq j\leq k$.
\end{thm}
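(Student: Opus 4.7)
The plan is to reformulate the theorem through the pseudometric $d_k$ by proving the equivalent statement that $A\subseteq\Diff_+^k(M^1)$ has the relative property (OB) if and only if $A$ is $d_k$-bounded. The equivalence of $d_k$-boundedness with the stated suprema condition is then a matter of chasing the commutative diagram relating $\Phi_k$ to the integration maps $\mathcal{I}$ and $\iota$: $d_k$-boundedness of $A$ yields a uniform bound on $\|\Phi_k(f)\|$, from which Lemma~\ref{lemma_muledeer} recovers bounds on every $\|f^{(j)}\|$ while Lemma~\ref{lemma_muskox} (via Lemma~\ref{lemma_garbanzo}) gives the $\|\log f'\|$ bound; conversely, uniform bounds on $\|\log f'\|$ and the $\|f^{(j)}\|$'s convert back to a bound on $\|\Phi_k(f)\|$ via the polynomial expressions of Lemma~\ref{lemma_mandrill}. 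The case $k=\infty$ reduces to the finite cases, since any $C^\infty$-neighborhood of $e$ contains a basic $C^k$-neighborhood for some finite $k$, and the straight-line paths constructed below at level $k$ automatically stay in $\Diff_+^\infty$ when the starting element does.

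For the forward direction, continuity of $\Phi_k$ makes $U=B_{d_k}(e,1)$ an open neighborhood of $e$, so relative (OB) yields some $r$ with $A\subseteq U^r$. Given $f=u_1\cdots u_r\in A$, I would set $h_i=u_iu_{i+1}\cdots u_r$ (so that $h_{r+1}=e$ and $h_1=f$) and estimate $d_k(h_i,e)$ by induction on $r-i$ via the triangle inequality
\[
d_k(h_i,e)\le d_k(u_ih_{i+1},h_{i+1})+d_k(h_{i+1},e).
\]
Substituting $h\mapsto h^{-1}$ in Lemma~\ref{lemma_main} bounds the first summand by $A_k(\|\Phi_k(h_{i+1}^{-1})\|)\cdot d_k(u_i,e)$, and combining Corollary~\ref{cor_radishes} with Lemmas~\ref{lemma_muskox} and~\ref{lemma_garbanzo} shows that $\|\Phi_k(h^{-1})\|$ is controlled by a non-decreasing function of $d_k(h,e)$. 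The recursion then yields a finite bound on $d_k(f,e)$ depending only on $r$.

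For the reverse direction with $M^1=I$, Lemma~\ref{lemma_homeomorphism} makes $\Phi_k$ a homeomorphism onto $C[0,1]\oplus\mathbb{R}^{k-2}$. Given $f$ with $\|\Phi_k(f)\|\le M$ and an arbitrary open neighborhood $U$ of $e$ containing some $B_{d_k}(e,\epsilon)$, I would pull back the straight-line path $t\mapsto t\Phi_k(f)$ through $\Phi_k^{-1}$ to a discretization $e=f_0,f_1,\ldots,f_n=f$, then set $u_i=f_if_{i-1}^{-1}$ so that $f=u_n\cdots u_1$. Since each $\|\Phi_k(f_{i-1})\|\le M$, Lemma~\ref{lemma_main} yields $d_k(u_i,e)\le A_k(M)\cdot M/n$, and choosing $n>A_k(M)M/\epsilon$ places every $u_i$ in $U$, so $f\in U^n$ with $n$ uniform over $A$. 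For $M^1=\mathbb{S}^1$, I would factor $f=R_{f(0)}\circ g$ with $g$ fixing $0\in\mathbb{S}^1$ and apply the same straight-line argument to $g$; the key point is that $\Phi_k(g_t)=t\Phi_k(g)$ forces $\phi_j(g_t)=t\phi_j(g)$ for every $j$ (because $\mathcal{I}$ is linear), which preserves the circle boundary conditions $\phi_j(g_t)(0)=\phi_j(g_t)(1)$ along the path and keeps the discretization inside $\Diff_+^k(\mathbb{S}^1)$. The remaining rotation factor $R_{f(0)}$ lies in the compact subgroup $\mathbb{S}^1\subseteq\Diff_+^k(\mathbb{S}^1)$, which is automatically relatively (OB).

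The hard part will be the non-right-invariance of $d_k$ for $k\ge 2$: every translation pays a Lipschitz price $A_k(\|\Phi_k(h)\|)$ via Lemma~\ref{lemma_main}. In the forward direction these factors must be kept finite through the $r$-fold recursion, while in the reverse direction they dictate how finely the path must be discretized. The chain of bridging estimates in Lemmas~\ref{lemma_muskox}--\ref{cor_radishes}, which converts $d_k$-control into $\rho_k$-control on inverses and thence back into $\|\Phi_k\|$-control on inverses, exists precisely to keep this bookkeeping tractable.
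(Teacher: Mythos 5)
Your proposal is correct and follows essentially the same route as the paper: both directions reduce the relative property (OB) to $d_j$-boundedness, using the translation estimate of Lemma \ref{lemma_main} to telescope a product $u_1\cdots u_r$ in the forward direction and to discretize a linear path $t\mapsto\Phi_j^{-1}(t\,\Phi_j(f))$ in the reverse direction, with the circle case handled by splitting off a rotation into the compact (hence relatively (OB)) subgroup $\mathbb{S}^1$. The only cosmetic differences are that the paper fixes a single Lipschitz constant once by symmetrizing $U=V\cap V^{-1}$ where you re-estimate it recursively along the product, and that your path scales the whole vector $\Phi_j(f)$ rather than only its top coordinate while freezing the initial conditions.
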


\begin{proof}  First fix any $1\leq k\leq \infty$ and suppose $A$ has the property (OB) relative to $\Diff_+^k(M^1)$.  Let $U$ be the $d_1$-ball of radius $1$ about identity, so $U$ is open in $\Diff_+^k(M^1)$.  Since $A$ has the relative property (OB) and $\Diff_+^k(M^1)$ is connected, there is a positive integer $r$ so that $A\subseteq U^r$.  Since $U$ is $d_1$-bounded, and $d_1$ is right-invariant, it follows that $U^r$ is $d_1$-bounded and hence $A$ is $d_1$-bounded.  In particular $\displaystyle\sup_{f\in A}\sup_{x\in M^1}|\log f'(x)-\log f'(0)|<\infty$ and hence $\displaystyle\sup_{f\in A}\sup_{x\in M^1}|\log f'(x)|<\infty$.

Next fix any integer $j$ so that $2\leq j\leq k$, let $V$ be the $d_j$-ball of radius $1$ about identity, and let $U=V\cap V^{-1}$.  Again since $A$ has the relative property (OB) and $U$ is open, there is a positive integer $r$ so that $A\subseteq U^r$.  

Let $N=\displaystyle\sup_{h\in U}\|\Phi_j(h)\|$.  This supremum exists because $U$ is $d_j$-bounded.  Let $C=A_j(N)$, where $A_j$ is the function guaranteed in Lemma \ref{lemma_main}.  Then right translation by $h$ is $C$-Lipschitz for any choice of $h\in U$.  For any $a\in A$, we have $a=u_1u_2...u_r$ for some $u_1,...,u_r\in U=U^{-1}$.  Then by the triangle inequality and repeated applications of the Lipschitz property, we get

\begin{align*}
d_j(a,e) &\leq d_j(u_r,e)+d_j(u_{r-1}u_r,u_r)+...+d_j(u_1u_2...u_r,u_2...u_r)\\
&\leq d_j(u_r,e)+Cd_j(u_{r-1},e)+...+C^{r-1}d_j(u_1,e)\\
&\leq (1+C+...+C^{r-1})N.
\end{align*}
\vspace{.3cm}

So $A$ is $d_j$-bounded.  Therefore $A$ is $\rho_j$-bounded by Lemma \ref{lemma_muledeer}.  It immediately follows that $\displaystyle\sup_{f\in A}\sup_{x\in M^1}|f^{(j)}(x)|<\infty$.  Thus one direction of the theorem is proved.

For the other direction, we consider separately the cases $M^1=I$ and $M^1=\mathbb{S}^1$.  First take $M^1=I$.  Fix $1\leq k\leq\infty$ and suppose $\displaystyle\sup_{f\in A}\sup_{x\in I}|\log f'(x)|<\infty$, and $\displaystyle\sup_{f\in A}\sup_{x\in I}|f^{(j)}(x)|<\infty$ for every $j\leq k$.  Deduce that $A\subseteq B_{d_k}(e,M)$ for some fixed real number $M>0$.  Let $U\subseteq\Diff_+^k(I)$ be an arbitrary basic open set, so $U=B_{d_j}(e,\epsilon)$ for some $j\leq k$ and some real number $\epsilon>0$.  Let $A_j$ be the function supplied by Lemma \ref{lemma_main}.  Let $r$ be a positive integer so large that $\frac{A_j(M)}{r}<\epsilon$.  We now claim that $A\subseteq U^r$, and thus has the relative property (OB).

To see this, let $f\in A$ be arbitrary.  Set $F=\phi_j(f)$, and for each $0\leq i\leq r$, let $F_i=\frac{i}{r}\cdot F$.  Note that $\|F_i\|\leq M$ for each $i$, and $\|F_i-F_{i-1}\|\leq\frac{1}{r}$.  Now for each $1\leq i\leq r$, let $f_i=\Phi_j^{-1}(F_i,\phi_{j-1}(f)(0),...,\phi_2(f)(0))$.  Informally speaking, each $f_i$ has the same ``initial conditions'' as $f$, but $\phi_j(f_i)=\frac{i}{r}\phi_j(f)$ for each $i$.  Note that $d_j(f_i,f_{i-1})\leq\frac{1}{r}$ by construction.  Therefore since $\|\Phi_j(f_i)\|\leq\|\Phi_j(f)\|\leq M$, we have\\

\begin{center} $d_j(e,f_if_{i-1}^{-1})\leq A_j(M)\cdot d_j(f_i,f_{i-1})\leq\frac{A_j(M)}{r}<\epsilon$
\end{center}
\vspace{.3cm}

\noindent by Lemma \ref{lemma_main}.  Thus if we set $g_i=f_if_{i-1}^{-1}$ for each $1\leq i\leq r$, then we have $g_i\in B_{d_j}(e,\epsilon)=U$.  Moreover $f=g_rg_{r-1}...g_2g_1$, so $f\in U^r$.  Since $f\in A$ is arbitrary, $A\subseteq U^r$ as claimed.

Now we explain how to modify the above proof for $M^1=\mathbb{S}^1$.  Again suppose that $\displaystyle\sup_{f\in A}\sup_{x\in \mathbb{S}^1}|\log f'(x)|<\infty$, and $\displaystyle\sup_{f\in A}\sup_{x\in \mathbb{S}^1}|f^{(j)}(x)|<\infty$ for every $j\leq k$.  Let us denote by $H$ the stability group of $0$ in $\Diff_+^k(\mathbb{S}^1)$.  Note that each mapping $\Phi_j$, $1\leq j\leq k$, becomes injective when restricted to $H$, and thus $d_k$ becomes a compatible metric on $H$.  Let\\

\begin{center} $A^*=\{a^*\in H:\exists t\in\mathbb{S}^1 ~ ta^*=a\}$.
\end{center}
\vspace{.3cm}

Now if $a^*\in A^*$, compute that for some rotation $t\in\mathbb{S}^1$ and for all $x\in\mathbb{S}^1$, we have $\log (a^*)'(x)=\log (at^{-1})'(x)=\log a'(t^{-1}(x))<\displaystyle\sup_{f\in A}\sup_{x\in \mathbb{S}^1}|\log f'(x)|$, and $(a^*)^{(j)}(x)=(at^{-1})^{(j)}(x)=a^{(j)}(x)\leq \displaystyle\sup_{f\in A}\sup_{x\in \mathbb{S}^1}|f^{(j)}(x)|$.  So $A^*$ satisfies the same boundedness conditions as $A$.

Let $U\subseteq\Diff_+^k(\mathbb{S}^1)$ be an open neighborhood of $e$, and let $V=U\cap H$.  $V$ contains a basic open $d_k$-ball in $H$.  Therefore, repeating the exact same argument as in the interval case, we can find a positive integer $r$ so that $A^{*}\subseteq V^r\subseteq U^r$.  Since $U$ was arbitrary, we have shown that $A^{*}$ has the property (OB) relative to $\Diff_+^k(\mathbb{S}^1)$.  But $A\subseteq\mathbb{S}^1A^{*}$, and $\mathbb{S}^1$ is compact, therefore relatively (OB) as well.  It follows that $A$ is relatively (OB) in $\Diff_+^k(\mathbb{S}^1)$ as claimed.
\end{proof}

\section{Computing Quasi-Isometry Types}

\begin{thm}  Let either $M=I$ or $M=\mathbb{S}^1$.  Then the group $\Diff_+^k(M^1)$ has the local property (OB), and thus a well-defined, non-trivial quasi-isometry class, if and only if $k<\infty$.
\end{thm}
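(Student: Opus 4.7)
The argument splits along the two directions of the equivalence.

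For the direction $k<\infty \Rightarrow$ locally (OB) with non-trivial quasi-isometry class, the plan is to exhibit a concrete open neighborhood of identity whose relative (OB) property can be read off from the analytic lemmas of Section 3. Since $\Phi_k$ is continuous, the set $B=B_{d_k}(e,1)$ is an open neighborhood of $e$ in $\Diff_+^k(M^1)$. For any $f\in B$, Lemma \ref{lemma_garbanzo} gives $d_1(f,e)<2^{k-1}$, whence Lemma \ref{lemma_muskox} yields $|\log f'|\leq 2^k$ uniformly, and Lemma \ref{lemma_muledeer} further bounds $\rho_k(f,e)$ uniformly on $B$. In particular $\displaystyle\sup_{f\in B}\sup_{x\in M^1}|f^{(j)}(x)|<\infty$ for every $2\leq j\leq k$, so Theorem \ref{ob_character} certifies $B$ as relatively (OB) and $\Diff_+^k(M^1)$ as locally (OB). Since the group is connected, $B$ generates it, so it is also (OB)-generated, and Rosendal's framework as summarized in the introduction delivers a well-defined quasi-isometry class. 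For non-triviality, I would observe that the whole group is not itself relatively (OB): iterating any diffeomorphism with a hyperbolic fixed point (for example $f(x)=x+\epsilon x(1-x)$ on $I$, with an obvious circle analogue) produces $f^n$ with $\log(f^n)'(0)=n\log f'(0)\to\infty$, violating Theorem \ref{ob_character}.

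For the direction $k=\infty \Rightarrow$ not locally (OB), I would argue by contradiction. Suppose some open neighborhood $U$ of $e$ in $\Diff_+^\infty(M^1)$ is relatively (OB). Since the $C^\infty$-topology is the projective limit of the $C^k$-topologies, $U$ contains a basic neighborhood of the form $\{f:\rho_N(f,e)<\epsilon\}$ for some integer $N\geq 1$ and some $\epsilon>0$. The goal is to build, inside this basic set, a sequence $g_n$ with $\|g_n^{(N+1)}\|\to\infty$, contradicting Theorem \ref{ob_character}. The natural candidates are the high-frequency, small-amplitude perturbations $g_n(x)=x+c_n\sin(n\pi x)$ for $M^1=I$ (respectively $g_n(x)=x+c_n\sin(2\pi n x)$ for $M^1=\mathbb{S}^1$), which preserve endpoints (respectively descend to the circle) and are $C^\infty$-diffeomorphisms for small enough $c_n$. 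A direct computation shows that $\rho_N(g_n,e)$ is of order $c_n n^N$ while $\|g_n^{(N+1)}\|$ is of order $c_n n^{N+1}$; choosing $c_n$ proportional to $\epsilon/n^N$ then keeps $g_n$ inside the basic neighborhood yet forces the $(N+1)$-th derivative to grow linearly in $n$, yielding the desired contradiction.

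The only real technical work lies in this construction for the $k=\infty$ case: one must produce a single family that is simultaneously small in $C^N$, large in $C^{N+1}$, and still a genuine diffeomorphism, and the sinusoidal perturbations achieve all three criteria through the correct balance of amplitude against frequency. The $k<\infty$ direction, by contrast, is essentially immediate once one invokes the apparatus of Section 3, since Lemmas \ref{lemma_muskox}, \ref{lemma_garbanzo}, and \ref{lemma_muledeer} have already done the work of relating the pseudometrics $d_k$ to pointwise bounds on the derivatives.
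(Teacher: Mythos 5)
Your proposal is correct and follows essentially the same route as the paper: both directions reduce to the derivative-bound characterization of Theorem \ref{ob_character}, with connectedness supplying (OB)-generation and the unboundedness of derivatives on basic neighborhoods killing the $k=\infty$ case. You simply make explicit two points the paper leaves as assertions --- the iterated hyperbolic map witnessing non-triviality, and the sinusoidal family $x+c_n\sin(n\pi x)$ that is small in $C^N$ but large in $C^{N+1}$ --- and both constructions check out.
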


\begin{proof}  If $k<\infty$, then $d_k$-open balls have the property (OB) by Theorem \ref{ob_character}, and they generate $\Diff_+^k(M^1)$ since the group is connected.  Thus the group has a well-defined quasi-isometry type by the results of \cite{rosendal_2015a}.  This quasi-isometry type is also non-trivial by Theorem \ref{ob_character}, because $\Diff_+^k(M^1)$ contains maps with unboundedly large derivatives and therefore does not have the property (OB) relative to itself.

It remains only to check the case $k=\infty$.  To see that $\Diff_+^\infty(M^1)$ does not have the local property (OB), let $U$ be any basic open subset of the group.  Then $U$ has the form $U=B_{d_j}(e,\epsilon)$ for some integer $1\leq j<\infty$ and some distance $\epsilon>0$.  But $U$ contains maps with unboundedly large $k$-th derivatives for $k>j$.  It follows from Theorem \ref{ob_character} that $U$ does not have property (OB) relative to $\Diff_+^\infty(M^1)$.
\end{proof}

\begin{thm}  $\Diff_+^1(I)$ is quasi-isometric to $C[0,1]^\#$ (viewed as a Banach space with the norm metric) via the mapping $\Phi_1$.
\end{thm}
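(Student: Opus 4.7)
The plan is to apply Lemma \ref{prop_47} to the metric $d_1$, after observing that by construction $\Phi_1$ is literally an isometry from $(\Diff_+^1(I),d_1)$ onto $(C[0,1]^\#,\|\cdot\|)$. The preliminary hypotheses are already in place: by Lemma \ref{lemma_homeomorphism}(1) with $k=1$, the map $\Phi_1$ is a homeomorphism of $\Diff_+^1(I)$ onto $C[0,1]^\#$, so $d_1$ is in fact a topologically compatible metric on $\Diff_+^1(I)$ (not merely a pseudometric), and right-invariance of $d_1$ is delivered by the first sentence of Lemma \ref{lemma_main}.

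Next I would verify the two conditions in part (1) of Lemma \ref{prop_47}. For the large-scale geodesic property, the Banach space $(C[0,1]^\#,\|\cdot\|)$ is geodesic in the usual sense (travel along line segments), hence large-scale geodesic with $K=1$; because $\Phi_1$ is a surjective isometry, $(\Diff_+^1(I),d_1)$ inherits this.

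For coarse properness, I would compare $d_1$-boundedness with the characterization of relatively (OB) sets from Theorem \ref{ob_character}, which at $k=1$ reads: $A\subseteq\Diff_+^1(I)$ is relatively (OB) iff $\sup_{f\in A}\|\log f'\|<\infty$. The implication ``log-bounded $\Rightarrow$ $d_1$-bounded'' is immediate from the triangle inequality, since $\phi_1(f)=\log f'-\log f'(0)$. The converse direction uses the normalization $\int_0^1 f'(t)\,dt=f(1)-f(0)=1$: if $\|\phi_1(f)\|\leq M$ then $f'(0)\int_0^1\exp(\phi_1(f)(t))\,dt=1$ forces $e^{-M}\leq f'(0)\leq e^M$, and hence $\|\log f'\|\leq 2M$. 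This is essentially the computation already performed inside Lemma \ref{lemma_muskox}.

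Having verified the hypotheses of Lemma \ref{prop_47}, I conclude that $d_1$ is quasi-isometric to every right Cayley metric on $\Diff_+^1(I)$ coming from a relatively (OB) symmetric generating set, so $d_1$ is a representative of the canonical quasi-isometry class of $\Diff_+^1(I)$. Since $\Phi_1\colon(\Diff_+^1(I),d_1)\to(C[0,1]^\#,\|\cdot\|)$ is by definition a surjective isometry, it is in particular a quasi-isometry of $\Diff_+^1(I)$ onto $C[0,1]^\#$. There is no serious obstacle here; the only step that requires any thought is the coarse properness calculation above, and even that has effectively been done in Lemma \ref{lemma_muskox}.
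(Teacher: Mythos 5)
Your proposal is correct and follows essentially the same route as the paper: both verify the hypotheses of Lemma \ref{prop_47} for the right-invariant metric $d_1$ (coarse properness via Theorem \ref{ob_character}, large-scale geodesicity inherited from the Banach space $C[0,1]^\#$ through the isometry $\Phi_1$) and then conclude. Your explicit check that $d_1$-boundedness coincides with the log-boundedness condition of Theorem \ref{ob_character} is a detail the paper leaves implicit, and it is carried out correctly.
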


\begin{proof}  It suffices to note that the metric $d_1$ is right-invariant, coarsely proper by Theorem \ref{ob_character},  isometric to the norm-metric on $C[0,1]^\#$ (via $\Phi_1$), and therefore makes $\Diff_+^1(I)$ into a geodesic space, since $C[0,1]^\#$ together with its norm metric is geodesic.  It follows from Theorem \ref{prop_47} that $d_1$ is a representative of the quasi-isometry class of $\Diff_+^1(I)$.  Therefore the right-invariant word metrics on $\Diff_+^1(I)$ are each quasi-isometric to the norm-metric on $C[0,1]^\#$.
\end{proof}

\begin{thm}  $\Diff_+^1(\mathbb{S}^1)$ is quasi-isometric to $Z=\{f\in C[0,1]^\#:f(1)=0\}$ via the mapping $\Phi_1$.
\end{thm}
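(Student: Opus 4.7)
The plan is to exhibit a right-invariant topologically compatible metric $D$ on $\Diff_+^1(\mathbb{S}^1)$ that is coarsely proper and large-scale geodesic, so that by Lemma \ref{prop_47} it represents the canonical quasi-isometry class, and then to verify that $\Phi_1$ realizes a quasi-isometry onto $Z$. The wrinkle compared to the interval case is that $d_1$ is merely a pseudometric here, since left multiplication by any rotation preserves $\phi_1$; the fix is to append a basepoint-comparison term.

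First I would check that $\Phi_1$ lands in $Z$ and is surjective. The containment is immediate from the lift condition $\tilde f'(0) = \tilde f'(1)$, which forces $\Phi_1(f)(1) = 0$. For surjectivity, given $F \in Z$, set $s(F)(x) := C^{-1}\int_0^x e^{F(t)}\,dt$ with $C = \int_0^1 e^{F(t)}\,dt$, as in Lemma \ref{lemma_homeomorphism}; the endpoint condition $F(1)=0$ ensures $s(F)'(0) = s(F)'(1)$, so $s(F)$ descends to an element of $\Diff_+^1(\mathbb{S}^1)$ with $\Phi_1(s(F)) = F$ and $s(F)(0) = 0$. This produces a continuous section $s : Z \to \Diff_+^1(\mathbb{S}^1)$ whose image consists of basepoint-fixing diffeomorphisms, and which plays a pivotal role below.

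Next I would introduce
\[
D(f, g) := d_1(f, g) + d_{\mathbb{S}^1}(fg^{-1}(0), 0),
\]
which is right-invariant by Lemma \ref{lemma_main} and is a genuine metric, since $D(f, g) = 0$ forces $fg^{-1}$ to be a rotation fixing $0$, hence the identity. Routine $C^1$-continuity arguments show $D$ generates the topology. Coarse properness is inherited from Theorem \ref{ob_character}: the $d_{\mathbb{S}^1}$-term is bounded by $\diam(\mathbb{S}^1) \leq 2$, so $D$-boundedness of a subset reduces to $d_1$-boundedness, which the proof of Theorem \ref{ob_character} already identifies with the relative property (OB). Moreover, Lemma \ref{lemma_wombat} gives the identity $\|\phi_1(fg^{-1})\| = \|\phi_1(f) - \phi_1(g)\|$, yielding
\[
\|\Phi_1(f) - \Phi_1(g)\| \leq D(f, g) \leq \|\Phi_1(f) - \Phi_1(g)\| + 2;
\]
together with surjectivity this shows $\Phi_1$ is a quasi-isometry onto $Z$.

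The step I expect to require the most care is the large-scale geodesic property of $D$, because the rotation subgroup is not normal in $\Diff_+^1(\mathbb{S}^1)$, and generic left translations scramble the $\phi_1$-data of a path. To bypass this, I would route interpolations through the section $s$: given $f, g$, set $\ell_i := s\bigl((1 - i/N)\Phi_1(f) + (i/N)\Phi_1(g)\bigr)$ for $0 \leq i \leq N$ with $N := \lceil \|\Phi_1(f) - \Phi_1(g)\| \rceil$, and adjoin $f$ at the start and $g$ at the end. Because every $\ell_i$ fixes $0$, the consecutive intermediate $D$-distances collapse cleanly to $\|\Phi_1(f) - \Phi_1(g)\|/N \leq 1$ with no rotational contribution, while the two endpoint hops each cost at most $\diam(\mathbb{S}^1)$; this produces a discrete path of uniformly bounded step size and of total length within an additive $O(1)$ of $D(f, g)$. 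Pairs of small $D$-distance are handled by the trivial one-step path. Lemma \ref{prop_47} then delivers the conclusion.
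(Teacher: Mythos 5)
Your overall strategy is the right one and matches the paper's in outline (verify the hypotheses of Lemma \ref{prop_47} for a compatible right-invariant metric obtained by adding a bounded displacement term to $d_1$, then observe that $\Phi_1$ is a surjective quasi-isometry onto $Z$). However, there is a genuine error in the central object: the function $D(f,g)=d_1(f,g)+d_{\mathbb{S}^1}(fg^{-1}(0),0)$ is \emph{not} a metric. The added term $\tau(f,g)=d_{\mathbb{S}^1}(fg^{-1}(0),0)$ is right-invariant, but it is not symmetric ($\tau(f,g)$ measures the displacement of $h=fg^{-1}$ at $0$, while $\tau(g,f)$ measures the displacement of $h$ at $h^{-1}(0)$, and these differ for a generic $h$), and it fails the triangle inequality: writing $fg^{-1}=uv$ with $u=fh^{-1}$, $v=hg^{-1}$, one needs $d_{\mathbb{S}^1}(u(v(0)),0)\leq d_{\mathbb{S}^1}(u(0),0)+d_{\mathbb{S}^1}(v(0),0)$, which breaks down because $u$ need not be $1$-Lipschitz; e.g.\ take $v$ a small rotation and $u$ with a large derivative near $0$ so that $u(v(0))$ is far from $u(0)$. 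Since Lemma \ref{prop_47} is stated for topologically compatible right-invariant \emph{metrics}, your appeal to it does not go through as written. The repair is small: replace $\tau$ by the symmetric, subadditive, right-invariant term $\sup_{x\in\mathbb{S}^1}d_{\mathbb{S}^1}(f(x),g(x))$ (this is exactly the paper's metric $\sigma_1$), which still differs from $\|\Phi_1(f)-\Phi_1(g)\|$ by at most $\diam(\mathbb{S}^1)=2$, so your coarse-properness and quasi-isometry estimates survive verbatim.

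With that correction, your argument for large-scale geodesicity is a legitimate and in fact rather clean alternative to the paper's. The paper works with the stabilizer $H$ of $0$, imports large-scale geodesicity of $(H,\sigma_1)$ from $Z$ via $\Phi_1|_H$, and then corrects a path in $H$ by distributing rotational $n$-th roots of the rotation $R$ with $R(0)=f(0)$ along it; your route instead pushes the straight-line geodesic of $Z$ forward through the explicit section $s$ and pays only two additive hops of size at most $2$ at the endpoints. The one adjustment forced by the corrected metric is that consecutive steps $\sigma_1(\ell_{i-1},\ell_i)$ no longer ``collapse cleanly'' to $\|\Phi_1(f)-\Phi_1(g)\|/N$: two distinct basepoint-fixing diffeomorphisms still have nonzero sup-displacement. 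But that displacement is bounded by $2$, so each step costs at most $3$ and the total length is at most $3\|\Phi_1(f)-\Phi_1(g)\|+O(1)$, which is all that large-scale geodesicity requires. Everything else in your write-up (the image and surjectivity of $\Phi_1$ via the endpoint condition $F(1)=0$, the identification of $D$-boundedness with the relative property (OB) using $\int_0^1 f'=1$, and the two-sided comparison of $D$ with $\|\Phi_1(f)-\Phi_1(g)\|$) is correct.
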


\begin{proof}  Define a metric $\sigma$ on $\Diff_+^1(\mathbb{S}^1)$ by the rule\\

\begin{center} $\sigma_1(f,g)=\displaystyle\sup_{x\in\mathbb{S}^1}d_{\mathbb{S}^1}(f(x),g(x))+\|\Phi_1(f)-\Phi_1(g)\|$.
\end{center}
\vspace{.3cm}

It is straightforward to check that $\sigma_1$ is a right-invariant metric compatible with the topology on $\Diff_+^k(\mathbb{S}^1)$.  It is also coarsely proper by Theorem \ref{ob_character}.

Let $H$ denote the stabilizer of $0$ in $\Diff_+^k(\mathbb{S}^1)$.  Note that when $\sigma_1$ is restricted to $H$, we have $\sigma_1-2\leq d_1\leq \sigma_1$.  Therefore the mapping $\Phi_1$, restricted to $H$, becomes a surjective quasi-isometry from $(H,\sigma_1)$ onto $Z$ with its norm distance.  Since $Z$ is a Banach space, it is also a geodesic space, and therefore $(H,\sigma_1)$ is large-scale geodesic with some constant $K_0$.  We assume $K_0\geq 2$.

We claim $(\Diff_+^1(\mathbb{S}^1),\sigma_1)$ is large-scale geodesic with constant $K=10\pi\cdot K_0$.  By the right-invariance of $\sigma_1$, it suffices to check the large scale geodecity condition at the identity; so let $f\in\Diff_+^1(\mathbb{S}^1)$ be arbitrary.  We may also assume $\sigma_1(e,f)>10\pi\cdot K_0$.  Let $R\in\mathbb{S}^1$ be the unique rotation satisfying $R(0)=f(0)$.  So $R^{-1}f$ lies in $H$.

By the large-scale geodecity of $(H,\sigma_1)$ we may find finitely many maps $h_0,h_1,...,h_n\in H$ so that $h_0=e$, $h_n=R^{-1}f$, and $\sigma_1(h_{i-1},h_i)\leq K_0$ for $1\leq i\leq n$, and $\displaystyle\sum_{i=1}^n\sigma_1(h_{i-1},h_i)=K_0\cdot\sigma_1(e,R^{-1}f)$.

Let now $R_1$ be a rotational $n$-th root of $R$, and set $R_i=(R_1)^i$ for $0\leq i\leq n$.  Let $\ell_i=R_ih_i$ for $0\leq i\leq n$.  So $\ell_0=e$ and $\ell_n=f$.  Compute that for any rotation $S$ and any map $h\in\Diff_+^1(\mathbb{S}^1)$, we have $\Phi_1(Sh)=\Phi_1(h)$, and therefore $\sigma_1(Sh,h)=\displaystyle\sup_{x\in\mathbb{S}^1}d_{\mathbb{S}^1}(S(h(x)),h(x))\leq d_{\mathbb{S}^1}(S(0),0)\leq 2$.  Therefore

\begin{align*}
\sigma_1(\ell_{i-1},\ell_i) &= \sigma_1(R_{i-1}h_{i-1},R_ih_i)\\
&\leq \sigma_1(R_{i-1}h_{i-1},h_{i-1})+\sigma(h_{i-1},h_i)+\sigma_1(h_i,R_ih_i)\\
&\leq 2+K_0+2\leq 10\pi\cdot K_0 = K.
\end{align*}
\vspace{.3cm}

Also

\begin{align*}
\displaystyle\sum_{i=1}^n \sigma_1(\ell_{i-1},\ell_i) &\leq \sum_{i=1}^n(\sigma_1(R_{i-1}h_{i-1},h_{i-1})+\sigma(h_{i-1},h_i)+\sigma_1(h_i,R_ih_i))\\
&\leq K_0\cdot\sigma_1(e,R^{-1}f)+\displaystyle\sum_{i=1}^n(\sigma_1(R_{i-1}h_{i-1},h_{i-1})+\sigma_1(h_i,R_ih_i))\\
&\leq K_0\cdot\sigma_1(e,f)+K_0\cdot\sigma_1(f,R^{-1}f)+2\displaystyle\sum_{i=1}^n(2\pi/n+2\pi/n)\\
&\leq K_0\sigma_1(e,f)+10\pi\\
&\leq 2K_0\sigma_1(e,f) \leq 10\pi K_0\cdot\sigma_1(e,f).
\end{align*}
\vspace{.3cm}

This proves the claim.  So $\sigma_1$ is a representative of the quasi-isometry class of $\Diff_+^k(\mathbb{S}^1)$ by Theorem \ref{prop_47}.  To finish the proof, we reiterate our observation that $\Phi_1$ is a quasi-isometry between $(\Diff_+^1(\mathbb{S}^1),\sigma_1)$ and $Z$.
\end{proof}

\begin{cor} \label{cor_main}  The following are mutually quasi-isometric:
\begin{enumerate}
		\item $\Diff_+^1(I)$ as a topological group;
		\item $\Diff_+^1(\mathbb{S}^1)$ as a topological group;
		\item $C[0,1]$ as a Banach space;
		\item $C[0,1]$ as an additive topological group.
\end{enumerate}
\end{cor}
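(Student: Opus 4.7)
The plan is to chain together the two theorems immediately preceding the corollary. Those theorems supply explicit quasi-isometries $\Phi_1:\Diff_+^1(I)\to C[0,1]^\#$ and $\Phi_1:\Diff_+^1(\mathbb{S}^1)\to Z$, where $C[0,1]^\# = \{f\in C[0,1]:f(0)=0\}$ is a closed hyperplane in $C[0,1]$ and $Z = \{f\in C[0,1]^\#:f(1)=0\}$ is a closed codimension-two subspace of $C[0,1]$. To deduce the corollary it suffices to identify each of these two subspaces with $C[0,1]$ itself up to quasi-isometry, and then to promote the quasi-isometry type from the Banach-space structure to the additive topological-group structure on $C[0,1]$.

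For the first identification I would invoke the classical Banach-space fact that $C[0,1]$ is linearly isomorphic to each of its closed hyperplanes; one concrete derivation is to observe via Miljutin's theorem that $C[0,1]\cong C([0,1]\sqcup\{\ast\})\cong C[0,1]\oplus\mathbb{R}$ and then apply a standard Pelczynski-type decomposition argument. Invoked once this gives $C[0,1]^\#\cong C[0,1]$; invoked a second time, with $Z$ viewed as a closed hyperplane inside $C[0,1]^\#$ (and using that $C[0,1]^\#$, being isomorphic to $C[0,1]$, shares the hyperplane property), it gives $Z\cong C[0,1]$. Any Banach-space isomorphism is bi-Lipschitz and hence a quasi-isometry, so composing these isomorphisms with $\Phi_1$ produces pairwise quasi-isometries among items (1), (2), and (3).

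To bridge items (3) and (4) I would cite Rosendal's result, noted in the introduction, which shows that for a separable Banach space the norm metric is quasi-isometric to any right-invariant topologically compatible metric on the underlying additive Polish group. Concatenating all of these quasi-isometries then completes the proof. The only step with genuine content beyond bookkeeping is the hyperplane identification $C[0,1]\cong C[0,1]^\#$; I anticipate this being the main obstacle to a self-contained proof, but since it is a standard and well-documented fact in Banach-space theory, a direct citation suffices, and the corollary follows by routine concatenation of the existing quasi-isometries.
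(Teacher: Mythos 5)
Your proposal is correct and follows essentially the same route as the paper: chain the two preceding theorems, identify $C[0,1]^\#$ and $Z$ with $C[0,1]$ via the standard fact that $C[0,1]$ is isomorphic to its closed subspaces of finite codimension (the paper simply cites Exercise 5.33 of Fabian et al.\ rather than sketching a Miljutin/Pe{\l}czy\'nski argument), and quote Rosendal's result for the equivalence of (3) and (4).
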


\begin{proof}  It is well-known that $C[0,1]$ is isomorphic as a Banach space to each of its closed hyperplanes (its closed subspaces of finite codimension).  This appears for instance as Exercise 5.33 in \cite{fabian_etal_2001a}.  It follows that $C[0,1]$, $C[0,1]^\#$, and $Z=\{F\in C[0,1]:F(1)=0\}$ are mutually quasi-isometric.  This proves the quasi-isometric equivalence of (1)--(3).  The fact that (3) and (4) are quasi-isometric is a special case of a general fact about Banach spaces proved in \cite{rosendal_2015a}.
\end{proof}




We have proven that $\Diff_+^k(M^1)$ is quasi-isometric to $C[0,1]^\#$, a closed hyperplane of $C[0,1]$, for either choice of $M^1=I$ or $M^1=\mathbb{S}^1$.  Referring back to the road map in Figure 1, it seems natural to conjecture that $\Diff_+^2(M^1)$ is quasi-isometric to $C[0,1]$; $\Diff_+^3(M^1)$ is quasi-isometric to $C[0,1]\oplus\mathbb{R}$; etc.  It would then follow that all of these topological groups are mutually quasi-isometric, since the corresponding Banach spaces are isomorphic.  If this is the case, then evidently computing quasi-isometry types fails to distinguish the groups $\Diff_+^k(M^1)$.

\begin{question}  Is $\Diff_+^k(M^1)$ quasi-isometric to $\Diff_+^j(M^1)$ for $j\neq k$?
\end{question}


\begin{thebibliography}{99}

\bibitem{banach_mazur_1932a} S. Banach,
{\it Th\'{e}orie des op\'{e}rations lin\'{e}aires.}  Monografie Matematyczne, Warsaw, 1932.

\bibitem{decornulier_delaharpe_2015a} Y. de Cornulier and P. de la Harpe,
{\it Metric geometry of locally compact groups}, preprint.

\bibitem{fabian_etal_2001a} Fabian, M., Habala, P., Hajek, P., Montesinos Santalucia, V., Pelant, J., Zizler, V.,
{\it Functional Analysis and Infinite-Dimensional Geometry.}  Springer-Verlag, New York, 2001.

\bibitem{kalton_2008a} Kalton, N. J.,
{\it The Nonlinear Geometry of Banach spaces}, Revista Matem\'{a}tica Complutense 21 (2008), 7–60.

\bibitem{ostrovskii_2013a} Ostrovskii, M,
{\it Metric embeddings: bilipschitz and coarse embedddings into Banach spaces,} de Gruyter Studies in Mathematics, vol. 49, de Gruyter, Berlin, 2013.

\bibitem{rosendal_2009a} C. Rosendal,
{\it A topological version of the Bergman property}. Forum Mathematicum 21 (2009), no. 2, 299-332.

\bibitem{rosendal_2015a} C. Rosendal,
{\it Coarse geometry of topological groups}, preprint.

\bibitem{rosendal_2015b} C. Rosendal,
{\it Large scale geometry of automorphism groups}, preprint.

\end{thebibliography}
\end{document}